\newtheorem{theorem}{Theorem}[section]
\newtheorem{oldtheorem}{Theorem}
\newtheorem{lemma}[theorem]{Lemma}
\newtheorem{corollary}[theorem]{Corollary}
\theoremstyle{statement}
\newtheorem{remark}[theorem]{Remark}
\newtheorem{definition}[theorem]{Definition}
\theoremstyle{fancyproclaim}
\numberwithin{equation}{section}
\renewcommand{\theenumi}{\roman{enumi}}
\newcommand{\UB}{\textup{(UB)}}
\newcommand{\efs}{eigenfunctions }
\newcommand{\be}{\begin{equation}}
\newcommand{\ee}{\end{equation}}
\newcommand{\ben}{\begin{equation*}}
\newcommand{\een}{\end{equation*}}
\newcommand{\lb}{\label}
\newcommand{\ol}{\overline}
\DeclareMathOperator{\tr}{tr}
\DeclareMathOperator{\dist}{dist}
\newcommand{\im}{\mathop{\mathrm{Im}}}
\newcommand{\re}{\mathop{\mathrm{Re}}}
\providecommand{\abs}[1]{\left\lvert#1\right\rvert}
\providecommand{\norm}[1]{\left\lVert#1\right\rVert}
\newcommand{\Cset}{\mathbb{C}}
\newcommand{\Rset}{\mathbb{R}}
\newcommand{\calD}{{\mathcal D}}
\newcommand{\calK}{{\mathcal K}}
\newcommand{\calL}{{\mathscr L}} 
\newcommand{\hilb}{\mathfrak{H}}
\newcommand{\xiexp}{\varSigma^{(exp)}}
\newcommand{\lexp}{\Lambda^{(exp)}}
\newcommand{\gubspace}{A^2_a(\mathbb{R},w^{-2})}
\newcommand{\Ksigma}{\calK_\varSigma}
\newcommand{\KLambda}{\calK_\Lambda}
\newcommand{\qexp}{quasi-exponential}
\newcommand{\bexp}{$B$-\qexp}
\newcommand{\rreg}{\emph{right-re\-gu\-lar}}
\begin{document}
\date{\today}
\title[$B$-quasi-exponentials]{On unconditional basisness of
$B$-quasi-exponentials}
\author[Arkadi Minkin]{Arkadi Minkin}

\address{Parametric Technology Ltd., MATAM, 31905 Haifa, Israel}
\email{arkadi\_minkin@yahoo.com}
\begin{abstract}
We consider operator $A=K^{-1}, Kh=Bh+(h,f)g$ in separable Hilbert space $\hilb$ where $B$ is quasinilpotent operator whose resolvent is entire operator function of exponential type $a$.
Let $\varphi(z)$ be Fredholm determinant of $K$.
We prove two necessary conditions for unconditional basisness of eigenfunctions of $A$
without any a priori restrictions on its spectrum $\Lambda$:
weights
$w^2(\lambda)$ and
$W^2(\lambda)$ on $\Rset$ must satisfy Muckenhoupt condition ($A_2$).

Here $w^2(\lambda) := \norm{g(\lambda)}^2$ and
$W^2(\lambda) := \abs{\varphi(\lambda)}^2/\left(\delta(\lambda)\cdot w^2(\lambda)  \right)$ where vec\-tor-valued function $g(z) := (I-zB)^{-1}g$ and
$\delta(z)$ is regularized distance from point
$z\in\Cset$ to $\Lambda$.

\end{abstract}

\subjclass[2000]{Primary: 47B40}
\keywords{Muckenhoupt condition, unconditional basisness, similarity to normal.}

\maketitle

\section*{INTRODUCTION\label{sec:Intro}}
One of the main challenges of spectral theory is question of similarity to normal for non-selfadjoint operators. If operator $A$ has compact resolvent
the problem reduces to
 \emph{unconditional basisness} of its \efs and we will write $A \in\UB$. A celebrated example
 is question of unconditional basisness of exponentials or more generally of values of reproducing kernels deeply investigated in the classical papers
 of B.S.Pavlov, S.V.Hru\v{s}c\"ev and N.K.Nikol'skii
 \cite{Pav79}, \cite{Hru79},\cite{Nik80}, giving birth to \emph{projection} method,
 see also their excellent survey article \cite{HNP}.


Similarity to normal problem has been attacked from various directions,
for dissipative as well as for non-dissipative operators,
see for example
works of V.E.Katsnel'son, B.S.Pav\-lov, N.K.Ni\-kol'\-skii, V.I.Va\-sy\-nin,
S.Treil, S.Ku\-pin \cite{Kats67}, \cite{Pav73},\cite{NikVs},\cite{NikTr},\cite{KuTr}
to name a few.
We refer the reader to \cite{Nik} and \cite{NikVs} for more details.

 Recently G.~M.~Gub\-reev substantially developed \textit{projection} method
in a series of remarkable publications
\cite{gub99}--\cite{gub-izv}
and others. He introduced classes $\xiexp$
and $\lexp$ of quasinilpotent operators
with exponential resolvent's growth and considered
their one- and
finite-dimensional perturbations continuing A.P.Khromov's investigations of
perturbations of Volterra operators
\cite{Khr}.

However G.M.Gubreev imposed several
a priori restrictions on the operators in question.
Our goal is to deduce basic necessary conditions for unconditional basisness of \efs
\emph{without any a priori hypotheses}.
In order to describe existing results in more details we
will introduce some notations
sticking to those from \cite{gub00} and \cite{gub03}.

Let $\xiexp$ denote the set of all bounded linear operators $B$
in a separable Hilbert space $\hilb$, satisfying the following conditions:
\begin{enumerate}
\item spectrum $\sigma(B) = \{0\}, \ \ker{B} = \{ 0\}$;
      \lb{en-spB}
\item $\ker{B}^* = \{ 0 \}$;
      \lb{en-ker}
\item $(I-zB)^{-1}$ is of finite exponential type $a>0$;
      \lb{en-exp-type}
\item the semigroup of operators $V(t):=\exp(-\mathrm{i}B^{-1}t), t\ge 0$
      is of class $C_0$.
      \lb{en-C0}
\end{enumerate}
Recall that the latter means that $V(t)$ is continuous in strong operator topology for $t\ge0$ and
\(
   \lim_{t\rightarrow +0}V(t)h = h, \; \forall h\in\hilb.
\)
Obviously such $B$ is quasinilpotent due to condition \eqref{en-exp-type}.

Let $\lexp$ be the set of all bounded linear operators $B$ in $\hilb$, satisfying conditions \eqref{en-spB},\eqref{en-exp-type} above, whereas
conditions \eqref{en-ker} and \eqref{en-C0} are replaced by dissipativity of $B$
\begin{enumerate}
\item[(\ref{en-ker}$^\prime$)]  $(1/2\mathrm{i})(B-B^*) \ge 0$.
\end{enumerate}
Note that for dissipative $B$ the subspace $\ker{B}$ reduces it
whence automatically $\ker{B^*} = \ker{B} = \{ 0 \}$
for $B\in\lexp$ and \eqref{en-ker} is valid too. So $\lexp\subset\xiexp$.
Moreover for any automorphism $S$ of $\hilb$ we have
\ben
   S\lexp S^{-1} := \{ SBS^{-1}, \; B\in  \lexp\} \subset\xiexp.
\een
\begin{definition}
A vector-valued function of the form
\ben
   g(z) = g(z,B) := (I-zB)^{-1}g, \; B\in\xiexp, \; g\in\hilb,
\een
is called a \bexp{ } or merely \qexp{ } if the operator is known from the context.
\end{definition}
\begin{definition}[Gubreev]
\bexp{ } $g(\cdot)$ is said to be
\rreg{ }
if there exists constant $M >0$ such that
\begin{equation*}
   \int_\Rset \abs{(g(x),h)}^2\norm{g(x)}^{-2}\mathrm{d}x
   \le M\norm{h}^2, \quad \forall h\in\hilb
\end{equation*}
and is said to be \emph{regular}
if there exist constants $m,M >0$ such that
\begin{equation*}
   m\norm{h}^2 \le
   \int_\Rset \abs{(g(x),h)}^2\cdot\norm{g(x)}^{-2}\mathrm{d}x
   \le M\norm{h}^2, \quad \forall h\in\hilb.
\end{equation*}
\end{definition}
\begin{definition}[Gubreev]
A pair of vectors $f,g\in\hilb$ is said to be \emph{compatible}
with an operator $B$ of class $\xiexp$ if $\ker{K} = \ker{K^*} = \{0\}$
for operator
\begin{equation}
  \label{eq-op-K}
    Kh := Bh + (h,f)g, \quad h \in \hilb.
\end{equation}
In this case we will consider a closed densely defined operator $A:=K^{-1}$.
Its action may be also described as follows:
\begin{equation}
   \label{eq-op-A}
   \text{\textit{if} } h=l+(B^{-1}l,f)g \quad \text{\textit{then} }\ Ah = B^{-1}l,
   \quad
   l\in\mathfrak{L}:=B\hilb.
\end{equation}
The set of such operators $A$ will be denoted $\Ksigma$.
In case we take $B$ from $\lexp$ we will denote $\KLambda$ the set of respective operators $A$.
\end{definition}

For $B\in\xiexp$ introduce another \qexp
\ben
   f_*(z) := f(z,B_*) = (I-zB_*)^{-1}f, \; B_* := -B^*.
\een
\begin{remark}
  \lb{rem:adjoint}
Note that $B_* \in\xiexp$ because the semigroup
\ben
   V(t)^* \equiv\exp(-\mathrm{i}B_*^{-1}t)
\een
is also of class $C_0$, see Corollary 1.3.2 in
\cite{Neer}.
Therefore $A\in\Ksigma$ implies $A_*:=-A^*\in\Ksigma$.
\end{remark}

To avoid unnecessary complications throughout the paper we will always assume
\emph{simplicity of eigenvalues of $A$}.
Set $z_*:=-\bar{z}$.
Then the resolvent of $A\in\Ksigma$ takes the form
\begin{align}
 \lb{eq-res-A}
(A-zI)^{-1}h & =
   B(I-zB)^{-1}h + \calL(z){h}\\
   \calL(z){h} &:= \varphi^{-1}(z)\cdot\left(h,f_*(z_*)\right)g(z)\notag \\
   \varphi(z) &:= 1-z(g(z),f). \notag
\end{align}
Denote $\Lambda = \{ \lambda_k\}$ the spectrum
of $A$ enumerated in the order of nondecreasing absolute values.
Of course $\Lambda = \varphi^{-1}(0)$.
A.~P.~Khromov pointed out to us that
$\varphi(z)$ is Fredholm determinant \cite[\S6.5.2]{Pit} of $K=A^{-1}$.
Note also that
\efs of $A$ and $A^*$ coincide respectively with vectors
\begin{equation}
  \label{eq-eigen}
  \{ g(\lambda_k) \}_{\lambda_k\in\Lambda}
\end{equation}
and
\begin{equation}
 \label{eq-conj-eigen}
   \{ f_*(\lambda_{k*}) \}_{\lambda_k\in\Lambda}.
\end{equation}
In the sequel we will frequently use the linear resolvent growth condition:
\ben
  \lb{LRG}
   \norm{R_z(A)} \asymp d(z), \quad d(z) := \dist(z,\Lambda) \tag{LRG}
\een
which stems from $A\in\UB$. Indeed, similarity of $A$ to normal operator $N$
means that
$R_z(A) = SR_z(N)S^{-1}$ for some bounded and boundedly invertible operator $S$.
Then $\norm{R_z(A)} \le \norm{S}\cdot\norm{S^{-1}}\cdot\norm{R_z(N)}$.
Interchanging $A$ and $N$ we get
inverse inequality and thus arrive to \eqref{LRG} because $\norm{R_z(N)} = d(z)$.

Note also a helpful formula
\begin{equation}
\label{eq-resB}
-\mathrm{i}B(I-zB)^{-1}h = \int_0^a \mathrm{e}^{\mathrm{i}zt}V(t)h\mathrm{d}t, \; h\in\hilb, \ z\in\Cset.
\end{equation}
whence for any fixed $c>0$
\be
  \lb{eq-res-norm}
  \norm{R_z(B)} \le M/(1+\abs{\im{z}}),\quad \im{z}\ge -c,\ M=M(c).
\ee
Therefore $(I-zB)^{-1}$ has exponential type $a>0$
in the lower half-plane and is bounded in the upper half-plane.
In addition mention a helpful formula
\begin{equation}
\label{eq-resB-est}
\int_\Rset \norm{R_\lambda(B)h}^2\mathrm{d}\lambda
  \le
  \frac1{2\pi}\int_0^a \norm{V(t)h}^2\mathrm{d}t \le K\norm{h}^2,
\end{equation}
where $K=\sup_{0\le t \le a} \norm{V(t)}^2/(2\pi)$.

\section{GUBREEV'S RESULTS.}\label{sec:gub}
G.M.Gubreev invented an integral transformation related to each
Muckenhoupt weight $w^2(\lambda)$ on $\Rset$.
Recall that this condition for a positive weight
$v(\lambda),\ {\lambda\in\Rset}$ reads as follows,
see Lemma 4.2 in  \cite[Part III]{HNP}:
\ben
   \lb{Muck}
    \sup_{z\in\Cset_+}v(z)\cdot(v^{-1})(z) < \infty \tag{$A_2$}.
\een
Here $\Cset_+$ is the upper half-plane and
$v(z)$ stands for harmonic continuation of $v(\lambda)$ to $z\in\Cset_+$.
For a Muckenhoupt weight $w^2$ G.M.Gubreev \cite[\S2]{gub00} defined function
$y_w(t)$ on $\Rset_+$
and introduced transformation
\ben
 \lb{G-trf}
  (\calD_w f)(u) := \frac1{\sqrt{2\pi}} \int_0^\infty y_w(u,t)f(t)\mathrm{d}t.
\een
It maps $L^2(\Rset_+)$ isomorphically onto weighted Hardy space $H^2_+(w^{-2})$
\cite{gub00}
and is bounded from below.
Notice also that Gubreev's transformation $\calD_w$ maps $L^2(0,a)$ isomorphically onto
Hilbert space $\gubspace$ \cite{gub00} of entire functions $F(z)$ of exponential type
such that
\begin{itemize}
	\item $h_F(\pi/2) \le 0,\ h_F(-\pi/2) \le a$;
	\item ($F,G)_{\gubspace} :=
	\int_\Rset F(\lambda)\cdot\ol{G(\lambda)}\cdot{w}^{-2}(\lambda)\mathrm{d}\lambda,
	\quad F,G \in \gubspace.$
\end{itemize}
Recall that for entire function $F(z)$ of exponential type its indicator function is
\ben
     h_F(\theta) := \limsup_{r\rightarrow\infty} r^{-1}\log\abs{F(r\mathrm{e}^{\mathrm{i}\theta})},
                      \; -\pi < \theta \le \pi.
\een
Denote $J_a$ the integration operator in $L^2(0,a)$:
\ben
   (J_af)(x) := \mathrm{i}\int_0^x f(s)\mathrm{d}s,\quad f \in L^2(0,a)
\een
and introduce $w$-\qexp{}
\ben
    y_w^a(z,t) := \left((I-z J_a)^{-1}y_w\right)(t),
    \quad 0 \le t\le a; \; z\in\Cset.
\een
With an operator $A\in K_\varSigma$ we associate two weights on the real line:
\begin{eqnarray}
   w^2(x)    &:= \norm{g(x)}^2\phantom{{_*}{_*}}
                         &= \norm{(I-xB)^{-1}g}^2, \label{eq-w}   \\
   w^2_*(x)  &:= \norm{f_*(x_*)}^2
                         &= \norm{(I+xB^*)^{-1}f}^2. \label{eq-w-star}
\end{eqnarray}
Let us agree that throughout the paper all operations on sets  are understood in element-wise way.
Now we are in position to formulate Gubreev's results.
\begin{oldtheorem}[Gubreev, theorem 0.2 in \cite{gub03}]
 \lb{thm:Gub-A}
 Let $A\in\Ksigma$ and assume that
\be
   \lb{eq-semi-bnd}
   \inf\im\Lambda > 0.
\ee
Then $A\in\UB$ iff the following conditions are satisfied:
\begin{align}
	\lb{en-wA2}         w^2(x)           &\in\eqref{Muck}; \tag{G1}\\
	\lb{en-WA2}         \abs{\varphi(\lambda)}^2/w(\lambda)^2
	                    &\in\eqref{Muck}; \tag{G2}\\
	\lb{en-Biso}        B\ \text{is isomorphic to } J_a: B &= SJ_aS^{-1};\tag{G3}\\
	\lb{en-on_gf}       Sy_w^a           &= g;\tag{G4}\\
	\lb{en-ind-phi}     h_\varphi(\pi/2) &= 0,\; h_\varphi(-\pi/2) = a;\tag{G5}\\
	\lb{eq-Lambda-carl} \Lambda          &\in(C);\tag{G6}
\end{align}
where $S$ is an isomorphism from $L^2(0,a)$ onto $\hilb$.
\end{oldtheorem}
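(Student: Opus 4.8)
The plan is to reduce unconditional basisness of the eigensystem to a Riesz-basis problem for reproducing kernels in the de Branges--type space $\gubspace$, where the classical projection method of Pavlov, Hru\v{s}c\"ev and Nikol'skii applies. By definition $A\in\UB$ means that $\{g(\lambda_k)\}$ is an unconditional, hence Riesz, basis of $\hilb$; the adjoint eigensystem $\{f_*(\lambda_{k*})\}$ then plays the dual role, which is legitimate since $A_*=-A^*\in\Ksigma$ by Remark~\ref{rem:adjoint}. First I would record that $A\in\UB$ forces the linear resolvent growth \eqref{LRG}, and that the hypothesis \eqref{eq-semi-bnd} places all of $\Lambda$ strictly inside $\Cset_+$, so that the half-plane model is available without boundary complications and the regularized distance reduces to a harmless factor.

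The core step is the following transport. Granting for a moment the structural identifications \eqref{en-Biso} and \eqref{en-on_gf}, namely $B=SJ_aS^{-1}$ and $g=Sy_w$, one computes
\begin{equation*}
   g(\lambda_k)=(I-\lambda_k B)^{-1}g
   = S(I-\lambda_k J_a)^{-1}y_w
   = Sy_w^a(\lambda_k,\cdot),
\end{equation*}
so that, $S$ being an isomorphism, $\{g(\lambda_k)\}$ is a Riesz basis of $\hilb$ exactly when $\{y_w^a(\lambda_k,\cdot)\}$ is a Riesz basis of $L^2(0,a)$. Applying Gubreev's transform $\calD_w$, which maps $L^2(0,a)$ isomorphically onto $\gubspace$ and carries $y_w^a(z,\cdot)$ to (a multiple of) the reproducing kernel $k_z$ of $\gubspace$, reduces the problem to the question \emph{when do the reproducing kernels $\{k_{\lambda_k}\}$ form a Riesz basis of $\gubspace$?} This is governed by the Pavlov--Hru\v{s}c\"ev--Nikol'skii theorem: the answer is that $\Lambda$ must be a Carleson set, giving \eqref{eq-Lambda-carl}, and that the generating function of $\Lambda$ in $\gubspace$ --- here the Fredholm determinant $\varphi$, whose zero set is $\Lambda$ --- must satisfy a Helson--Szeg\H{o}, equivalently Muckenhoupt, condition; unwound in the weighted norm of $\gubspace$ this is precisely \eqref{en-WA2} for $\abs{\varphi}^2/w^2$. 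Condition \eqref{en-wA2} enters at a prior stage: $w^2\in\eqref{Muck}$ is exactly what makes $g(\cdot)$ regular in the sense of the Definition and makes $\calD_w$ an isomorphism, and its necessity is read off from the two-sided $L^2$ estimate that unconditional basisness forces on the continuous family $\{g(x)\}_{x\in\Rset}$.

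It remains to justify the structural conditions, and this I expect to be the main obstacle, since they are not consequences of abstract basisness alone. Conditions \eqref{en-Biso} and \eqref{en-on_gf} encode that the functional model of $B$ is \emph{exactly} the integration operator $J_a$ and that the cyclic vector $g$ corresponds to $y_w$; I would derive them from the de Branges / functional-model uniqueness machinery, using that $(I-zB)^{-1}$ has exponential type $a$ by \eqref{en-exp-type} and \eqref{eq-res-norm}, that $h\mapsto(g(\cdot),h)$ embeds $\hilb$ into a space of entire functions, and Gubreev's identification of $\gubspace$ as the image of $\calD_w$. The indicator condition \eqref{en-ind-phi} then follows by a Paley--Wiener computation: from \eqref{eq-resB} the function $g(z)$ has type $a$ in the lower direction and, by \eqref{eq-res-norm}, is bounded in the upper one, so $\varphi(z)=1-z(g(z),f)$ inherits $h_\varphi(\pi/2)=0$ and $h_\varphi(-\pi/2)\le a$, equality being the genuine content of \eqref{en-ind-phi} --- it says no type is lost and $\gubspace$ has the correct length. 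The delicate point throughout is that \eqref{en-wA2}--\eqref{eq-Lambda-carl} must be handled \emph{jointly} rather than in isolation: each Muckenhoupt condition is what makes the relevant transform ($\calD_w$, and its $w_*$-analogue for the dual system) an isomorphism, so the argument has to be arranged so that basisness first yields enough regularity of $w$ to build the model, after which the remaining conditions are read off and, conversely, shown to be jointly sufficient.
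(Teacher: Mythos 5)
The first thing to note is that the paper does not prove this statement at all: Theorem \ref{thm:Gub-A} is quoted as background from Gubreev's article \cite{gub03}, so there is no in-paper proof to compare yours against. Judged on its own terms, your sketch correctly identifies the general strategy of the projection method (transport the eigenvectors to $L^2(0,a)$ via $S$, then to the weighted space $\gubspace$ via $\calD_w$, then invoke the Pavlov--Hru\v{s}c\"ev--Nikol'skii criterion for Riesz bases of reproducing kernels), and your observation that \eqref{eq-semi-bnd} keeps $d(\lambda)$ bounded away from zero on $\Rset$, so that no regularized distance is needed in \eqref{en-WA2}, is correct and consistent with the form of the theorem.

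However, there is a genuine gap, and you flag it yourself: the necessity of the structural conditions \eqref{en-Biso} and \eqref{en-on_gf}. Your argument \emph{grants} $B=SJ_aS^{-1}$ and $Sy_w^a=g$, derives \eqref{en-wA2}, \eqref{en-WA2} and \eqref{eq-Lambda-carl} from them, and then promises to recover \eqref{en-Biso}--\eqref{en-on_gf} from unspecified ``functional-model uniqueness machinery'' --- but that derivation is precisely the hard content of the theorem and is nowhere carried out. Showing that mere unconditional basisness of $\{g(\lambda_k)\}$ forces an abstract quasinilpotent $B\in\xiexp$ to be similar to the concrete integration operator $J_a$, with $g$ corresponding to the specific vector $y_w^a$, is not a routine consequence of $(I-zB)^{-1}$ having exponential type; it requires first establishing that $g(\cdot)$ is \emph{regular} (both the upper and the lower bound), that $w^2\in\eqref{Muck}$, and that $h\mapsto (h,g(\cdot))$ is an isomorphism of $\hilb$ onto $\gubspace$. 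The lower bound in the regularity of $g(\cdot)$ is exactly the step resting on the estimate $\norm{\calL(z)}\ge \im\bigl(\varphi'(z)/\varphi(z)\bigr)>d$, which --- as the paper's own remark after Theorem \ref{thm:Gub-A} stresses --- is valid only for semi-bounded spectrum and is where \eqref{eq-semi-bnd} is genuinely used; your sketch does not engage with this at all. Finally, the theorem is an equivalence, and your proposal neither separates the two directions cleanly nor addresses sufficiency beyond a closing sentence; as it stands it is an outline of one direction with its acknowledged main obstacle deferred, not a proof.
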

Recall that for a sequence of distinct complex numbers $\{\mu_k\}_{k=-\infty}^{\infty}$
in $\Cset_+$ (or in $\Cset_-$) the Carleson condition (C) means \cite[Ch. 7]{Gar}
\ben
    \inf_k \prod_{j\ne k} \abs{\frac{\mu_k -\mu_j}{\mu_k -\ol{\mu_j}}} > 0.
\een
This theorem remains also valid for the case of multiple spectrum provided that
eigenvalues multiplicities are uniformly bounded.
Of course condition \eqref{eq-semi-bnd} may be replaced by spectrum semi-boundedness
$\inf\im\Lambda>-\infty$ with appropriate change in formulations
but still it is too restrictive.

\begin{remark}
Note that important estimate (4.12)
of $\calL(z)$ from below in \cite{gub00} uses multiplicative representation of $\varphi(z)$
(same as ours \eqref{eq-phi-mult})
and  unnamed formula after it:
\ben
   \norm{\calL(z)} \ge \im\frac{\varphi^\prime(z)}{\varphi(z)} > d
\een
where $z$ lies in a certain half-plane.
However the second inequality above is correct \emph{only for semi-bounded} spectrum.
\end{remark}
In subsequent article \cite{gub03} G.M.Gub\-reev offered important refinement
of theorem \ref{thm:Gub-A} by
replacing  \eqref{eq-semi-bnd} with requirement that there is a strip free of spectrum:
\ben
   \lb{eq-free-strip}
   \boxed{
   \dist(\Lambda,\Rset) > 0.
   }
   \quad \tag{$G0$}
\een
Let $\Lambda_\pm := \Lambda \cap \Cset_\pm$.
We will need a counterpart of \eqref{eq-Lambda-carl}:
\ben
   \Lambda_\pm\in(C). \tag{$G6^\prime$}\lb{eq-Lambda-pm-carl}
\een
\begin{oldtheorem}[Gubreev, theorem 0.3 in \cite{gub03}]
 \lb{thm:Gub-B}
Let $A\in\KLambda$. Assume that $g(\cdot)$ is \rreg{} and the
spectrum $\Lambda$ satisfies \eqref{eq-free-strip}.
Then $A\in\UB$ iff conditions
\eqref{en-wA2}--\eqref{eq-Lambda-pm-carl} are valid where
\eqref{eq-Lambda-pm-carl} replaces \eqref{eq-Lambda-carl}.
\end{oldtheorem}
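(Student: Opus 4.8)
The plan is to reduce Theorem~\ref{thm:Gub-B} to the semi-bounded case of Theorem~\ref{thm:Gub-A} by cutting the spectrum along the free strip \eqref{eq-free-strip}. The first step is to transport the whole problem into the model space $\gubspace$. Conditions \eqref{en-Biso} and \eqref{en-on_gf} say that $B$ is similar to the integration operator $J_a$ through an isomorphism $S$ carrying $y_w^a$ to $g$; composing $S^{-1}$ with Gubreev's transform $\calD_w$ identifies $\hilb$ with $\gubspace$, sends each eigenfunction $g(\lambda_k)=(I-\lambda_k B)^{-1}g$ to the reproducing kernel $k_{\lambda_k}$ of $\gubspace$, and turns $\varphi$ into the generating entire function of $\gubspace$ whose zero set is $\Lambda$. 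Thus $A\in\UB$ becomes the assertion that $\{k_{\lambda_k}\}_{\lambda_k\in\Lambda}$ is an unconditional basis of the de~Branges-type space $\gubspace$, where the classical Pavlov--Nikol'skii projection method is available. The two Muckenhoupt conditions \eqref{en-wA2}, \eqref{en-WA2} and the index condition \eqref{en-ind-phi} are precisely what force $\calD_w$ to be an isomorphism onto $\gubspace$ and to pin down its defining function, so they enter both implications exactly as in Theorem~\ref{thm:Gub-A}.

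Next I would exploit \eqref{eq-free-strip} to write $\Lambda=\Lambda_+\cup\Lambda_-$ with $\inf\im\Lambda_+>0$ and $\sup\im\Lambda_-<0$, and correspondingly factor the generating function multiplicatively as $\varphi=\varphi_+\varphi_-$, where $\varphi_\pm$ collects the zeros lying in $\Cset_\pm$. The standard splitting lemma for unconditional bases then reduces unconditional basisness of $\{k_{\lambda_k}\}_\Lambda$ to two requirements: (a)~each subsystem $\{k_{\lambda_k}\}_{\Lambda_\pm}$ is an unconditional basis of its own closed span, and (b)~these two spans meet at a strictly positive angle. Requirement~(a) is the semi-bounded situation: for $\Lambda_+$ it is Theorem~\ref{thm:Gub-A} itself, and for $\Lambda_-$ it is the analogous statement for spectra bounded above, which follows from the same projection method together with the conjugate/dual symmetry of Remark~\ref{rem:adjoint} ($A_*=-A^*\in\Ksigma$, with biorthogonal eigensystem $\{f_*(\lambda_{k*})\}$). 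Since \eqref{en-wA2}, \eqref{en-WA2} and \eqref{en-ind-phi} live globally on $\Rset$, they are shared by both halves, and the only genuinely local output of (a) is the pair of one-sided Carleson conditions $\Lambda_\pm\in(C)$. This is exactly how the two-sided condition \eqref{eq-Lambda-carl} of Theorem~\ref{thm:Gub-A} weakens to \eqref{eq-Lambda-pm-carl}.

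The main obstacle is requirement~(b), the positive-angle estimate between the two spans $E_\pm:=\mathrm{span}\{k_{\lambda_k}\}_{\Lambda_\pm}$, which is the genuinely two-sided phenomenon invisible to Theorem~\ref{thm:Gub-A}. Both hypotheses of Theorem~\ref{thm:Gub-B} are consumed here. The free strip \eqref{eq-free-strip} keeps the zero clusters of $\varphi_+$ and $\varphi_-$ bounded away from $\Rset$, so that $\abs{\varphi_+}$ and $\abs{\varphi_-}$ stay comparable to nonvanishing outer-type factors on the line and neither cluster drags the other down; while the \rreg{} property of $g(\cdot)$ supplies the global Bessel (upper) bound which, combined with the positive angle, upgrades ``unconditional basis in the span'' to an unconditional basis of all of $\hilb$. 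I would establish the angle by bounding the cosine $\sup\{\abs{(x,y)}:x\in E_+,\,y\in E_-,\,\norm{x}=\norm{y}=1\}$ away from $1$ through a Carleson-embedding and an $A_2$ estimate on the ratio $\varphi_+/\varphi_-$ across the strip, and conversely deduce this bound from $A\in\UB$ via the linear resolvent growth \eqref{LRG}. Keeping the bookkeeping straight---which condition serves which half, and that the two implications close simultaneously---is the delicate part; the remainder is transcription of the classical reproducing-kernel criteria into $\gubspace$.
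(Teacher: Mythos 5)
First, a point of order: the paper does not prove Theorem~\ref{thm:Gub-B} at all. It is quoted, with attribution, as Theorem~0.3 of Gubreev's article \cite{gub03}, and serves only as background; the paper's own contribution is Theorem~\ref{thm:neces}, which removes the hypotheses \eqref{eq-free-strip} and right-regularity from the \emph{necessity} half of such statements. So there is no in-paper argument to compare yours against, and your proposal has to be judged on its own merits.

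Judged that way, the splitting strategy is reasonable in spirit but has two concrete gaps. First, step (a) is not a legitimate invocation of Theorem~\ref{thm:Gub-A}. That theorem concerns an operator $A\in\Ksigma$ \emph{all} of whose spectrum lies above a horizontal line; the half-family $\{g(\lambda_k)\}_{\lambda_k\in\Lambda_+}$ is not exhibited as the eigensystem of such an operator, and any operator naturally attached to it would have Fredholm determinant $\varphi_+$ rather than $\varphi$. Conditions \eqref{en-WA2} and \eqref{en-ind-phi} are stated for the full $\varphi$, and the factor $\varphi_+$ alone need not be of Cartwright class with the indicator behaviour \eqref{en-ind-phi} requires, so the claim that these conditions are ``shared by both halves'' does not follow. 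Second, the positive-angle step (b), which you correctly identify as the genuinely two-sided difficulty, is only announced, and the one quantitative assertion you make in its support is wrong: a strip free of zeros does not keep $\abs{\varphi_\pm(\lambda)}$ comparable to a nonvanishing outer factor on $\Rset$ (an infinite zero set clustering at infinity can still drive $\abs{\varphi_+(\lambda)}$ to $0$ along the line), and ``an $A_2$ estimate on the ratio $\varphi_+/\varphi_-$'' is not a defined object here. Note also the paper's own remark following Theorem~\ref{thm:Gub-A}: the real obstruction once the spectrum lies on both sides of $\Rset$ is the lower bound for $\calL(z)$, since Gubreev's inequality $\norm{\calL(z)}\ge \im\bigl(\varphi^\prime(z)/\varphi(z)\bigr)>d$ is valid only for semi-bounded spectra; this is precisely what Lemma~\ref{lem:est-M-below} of the present paper is built to replace, and your sketch never confronts that point. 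Until (a) is reformulated as a genuine reduction and (b) is actually proved, the proposal remains a plan rather than a proof.
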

Thus spectrum restrictions were \emph{nearly removed} at expense of imposing
a priori \emph{right-regularity} of $g(\cdot)$.
This condition seems to be difficult to check unless we consider
basisness of $w$-\qexp{s}
\cite{gub99}
or \emph{regular}
de Branges space ${\mathcal H}(E)$ \cite{gub01} where this property is actually built in definitions.

\begin{remark}
Recall that these general results of G.M.Gubreev contain many earlier achievements of other authors. Indeed, if we choose $\hilb=L^2(0,a),\ B=J_a$, take
$g=y_w^a$ for some Muckenhoupt weight $w^2$ and require $\Lambda\subset\Cset_+$,
then theorem \ref{thm:Gub-A} gives criterion of unconditional basisness of
$w$-\qexp{s}, equivalent to theorem 1
in \cite{Hru87} for $\theta(z) \equiv \exp(\mathrm{i}az)$.

For  $w(\lambda)\equiv1$ $w$-\qexp{s} become usual exponentials. Thus
theorem \ref{thm:Gub-A} reduces to
 B.S.Pavlov's result \cite{Pav79} whereas theorem \ref{thm:Gub-B} will coincide with our main theorem \cite{Min92} though the case of unconditional basisness in the span
in the latter  article remained uncovered.
\end{remark}

\begin{remark}
Note that full criterion of
unconditional basisness of exponentials in $L^2(0,a)$ \emph{without} spectrum restriction
\cite{Min91}
does not follow from theorem \ref{thm:Gub-B}.
Nevertheless, applying his approach to operators in de Branges space ${\mathcal H}(E)$, G.M.Gubreev announced important results concerning basisness of their reproducing kernel bases:
theorem 8 in \cite{gubtar03} (sufficient conditions) and theorem 1 in \cite{gubtar06}
(criterion without spectrum restrictions).

The latter result supersedes
that of \cite{Min91} and partially extends N.K.Ni\-kol\-skii's criterion
\cite{Nik80} to
the case of arbitrary spectrum $\Lambda\subset\Cset$. Indeed, de Branges space
${\mathcal H}(E)$ is isometrically isomorphic to the special model subspace
$K_\Theta $ in $H^2_+$, namely $\Theta(z) := \ol{E(\bar{z})}/E(z)$ -- inner function meromorphic in the whole plane.
\end{remark}

\begin{remark}
Recall that the main tool in \cite{Pav79} was boundedness of certain skew projector built via \emph{generating} function of the system of exponentials.
 This function was successfully applied to the question of basisness of scalar and vector exponentials, see e.g. \cite{AvdIv},\cite{Le}.

But if $A\in\Ksigma$ it must be some operator-valued entire function
$M(z)$ such that $M(\lambda_k)g(\lambda_k) = 0, \ \lambda_k\in\Lambda$.
However, to the best of our knowledge neither its existence nor criterion of boundedness
of Hilbert transform with \emph{operator} weight were established to date.
\end{remark}

\section{MAIN RESULT}
In the present paper
we establish necessity of
\eqref{en-wA2},\eqref{en-WA2} (the latter condition is suitably changed)
\emph{without} \eqref{eq-free-strip} and \emph{right-regularity} of $g(\cdot)$
which sets foundation to obtain full criterion for $A\in\UB$
without any a priori hypotheses.

Introduce regularized distance
\be
  \label{eq-dist-fn}
  \delta(z) := d(z)/(1+d(z)), \quad z \in \Cset
\ee
and define weight
\ben
  \boxed{
          W(\lambda) := \frac{\abs{\varphi(\lambda)}}{w(\lambda)\cdot \delta(\lambda)}
        }, \quad \lambda\in\Rset.
\een
\begin{theorem}
  \lb{thm:neces}
  If $A\in\Ksigma$  then
the following statements hold true.

\begin{enumerate}
	\item $A\in\UB \Longrightarrow g(\cdot)\ \text{is \rreg}\
	       \Longrightarrow w^2\in\eqref{Muck}$;	
	      \lb{en-cond1}
  \item $A\in\UB \Longrightarrow f_*(\cdot)\ \text{is \rreg}\
         \Longrightarrow\ w_*^2 \in\eqref{Muck}$;
	      \lb{en-cond2}
	\item $A\in\UB \Longrightarrow W^2\in\eqref{Muck}$ .
	      \lb{en-cond3}
\end{enumerate}
\end{theorem}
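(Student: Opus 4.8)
The plan is to reduce \eqref{en-cond2} and \eqref{en-cond3} to \eqref{en-cond1}, so that the genuine work is confined to the chain $A\in\UB\Rightarrow g(\cdot)\ \text{is \rreg}\Rightarrow w^2\in\eqref{Muck}$. Statement \eqref{en-cond2} is simply \eqref{en-cond1} read off for the adjoint problem: by Remark~\ref{rem:adjoint} one has $A_*=-A^*\in\Ksigma$ and $A\in\UB\Leftrightarrow A_*\in\UB$, while the passage $A\mapsto A_*$ interchanges the \qexp{}s $g(\cdot),f_*(\cdot)$ and the weights $w,w_*$; thus \eqref{en-cond2} requires no separate argument once \eqref{en-cond1} is available.

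The bridge to \eqref{en-cond3} is the pointwise comparison
\be
  \frac{w(x)\,w_*(x)}{\abs{\varphi(x)}}=\norm{\calL(x)}\asymp\frac{1}{\delta(x)},\qquad x\in\Rset,
\ee
which is equivalent to $W(x)\asymp w_*(x)$. Here I would use that $A\in\UB$ forces \eqref{LRG}, i.e.\ $\norm{R_x(A)}\asymp 1/d(x)$, that $\calL(x)=R_x(A)-R_x(B)$ by \eqref{eq-res-A}, and that $\norm{R_x(B)}\le M$ on $\Rset$ by \eqref{eq-res-norm}. The upper estimate $\norm{\calL(x)}\le c/d(x)+M\lesssim 1/\delta(x)$ is immediate from $1/\delta(x)=1/d(x)+1$; the regularized distance \eqref{eq-dist-fn} is introduced precisely to absorb the bounded floor $M$ contributed by $R_x(B)$ away from the spectrum, while near $\Lambda$ the term $1/d(x)$ dominates and the lower estimate follows from the lower half of \eqref{LRG}. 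Granting the comparison, $W\asymp w_*$ together with the invariance of \eqref{Muck} under replacing a weight by a comparable one converts \eqref{en-cond2} into \eqref{en-cond3}.

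It remains to treat the two implications in \eqref{en-cond1}. For $A\in\UB\Rightarrow g(\cdot)\ \text{is \rreg}$ the target is the Bessel-type bound $\int_\Rset\abs{(g(x),h)}^2 w^{-2}(x)\,\mathrm{d}x\le M\norm{h}^2$. I would regard $(h,g(x))$ as the generalized Fourier transform of $h$ attached to the semigroup $V(t)$: combining $(I-xB)^{-1}=I+xB(I-xB)^{-1}$ with \eqref{eq-resB} gives, for real $x$,
\be
  (h,g(x))=(h,g)-\mathrm{i}x\int_0^a \mathrm{e}^{-\mathrm{i}xt}(h,V(t)g)\,\mathrm{d}t,
\ee
an entire function of exponential type $a$ for which \eqref{eq-resB-est} is the Plancherel inequality. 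The substance is that normalization by $w(x)=\norm{g(x)}$ is compatible with this transform, and this is exactly where \eqref{LRG} must enter, through the lower half of the comparison above.

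I expect this last point to be the main obstacle. The tempting shortcut---splitting $(h,g(x))/w(x)$ along $\calL(x)=R_x(A)-R_x(B)$ and estimating the two pieces separately---is doomed, because $R_x(A)\notin L^2(\Rset)$ even after multiplication by $\delta$, so the delicate cancellation between the two terms is lost. The correct argument must keep $\calL(x)$ (equivalently the transform above) intact and marry the Plancherel bound \eqref{eq-resB-est} to the lower control on $w$ coming from \eqref{LRG}. Finally, for $g(\cdot)\ \text{is \rreg}\Rightarrow w^2\in\eqref{Muck}$ I would invoke Gubreev's transform $\calD_w$, which realizes the weighted Paley--Wiener space carrying $g(\cdot)$: right-regularity says that $h\mapsto(g(\cdot),h)$ embeds $\hilb$ boundedly into $L^2(\Rset,w^{-2})$, and testing this bound on the reproducing (Cauchy) kernels $k_\mu$, $\mu\in\Cset_+$, yields the uniform estimate $\sup_{\mu}w^2(\mu)\,(w^{-2})(\mu)<\infty$ defining \eqref{Muck} after passing to harmonic extensions.
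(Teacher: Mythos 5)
Your reductions of \eqref{en-cond2} and \eqref{en-cond3} to \eqref{en-cond1} match the paper exactly (pass to $A_*=-A^*$; use $W\asymp w_*$, which is \eqref{eq-main}), but the proposal has two genuine gaps, both at the points the paper identifies as the heart of the matter. First, your lower bound in $\norm{\calL(x)}\asymp 1/\delta(x)$ is only justified near the spectrum: the triangle inequality gives $\norm{\calL(x)}\ge \norm{R_x(A)h}/\norm{h}-M\ge c/d(x)-M$, which is vacuous once $d(x)$ is of order one, and that is precisely the regime where $1/\delta(x)\asymp 1$ and you still need $\norm{\calL(x)}\ge C>0$. This uniform lower bound is the paper's key Lemma \ref{lem:est-M-below}; it does \emph{not} come from \eqref{LRG} but from the trace identity $-\tr\calL(z)=\varphi'(z)/\varphi(z)=\sum'(z-\lambda_k)^{-1}+\mathrm{i}d$, the strict positivity $d>0$ (Lemma \ref{lem:b-posit}, a Paley--Wiener argument), and a Cauchy--Schwarz estimate of the sum by $\norm{\calL(z)}$ using \eqref{eq-UM} together with the $\UB$-expansions \eqref{eq-f-star-norm}, \eqref{eq-g-norm}. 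Without it, both your comparison $W\asymp w_*$ and the pointwise identity \eqref{eq-est-ginv} on which \eqref{en-cond1} rests are unsupported.

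Second, you correctly flag the implication $A\in\UB\Rightarrow g(\cdot)$ is \rreg{} as the main obstacle, but you then reject the one decomposition that actually works. Your claim that splitting along $\calL(\lambda)=R_\lambda(A)-R_\lambda(B)$ is ``doomed because $R_x(A)\notin L^2$ even after multiplication by $\delta$'' confuses the operator norm with the vector norm: for fixed $h$ one has $\int_\Rset\norm{R_\lambda(A)h}^2\delta^2(\lambda)\,\mathrm{d}\lambda\lesssim\norm{h}^2$, because $\norm{R_\lambda(A)h}^2\asymp\sum_k\abs{h_k}^2\abs{\lambda_k-\lambda}^{-2}$ by unconditionality and $\sup_k\int_\Rset\delta^2(\lambda)\abs{\lambda_k-\lambda}^{-2}\,\mathrm{d}\lambda<\infty$ (Lemma \ref{lem:estA}; the regularized distance is introduced exactly for this). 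Combined with \eqref{eq-resB-est} this yields \eqref{eq-est-main2}, and inserting \eqref{eq-est-ginv} gives right-regularity in two lines; your proposed alternative (keeping $\calL$ intact and ``marrying'' Plancherel to \eqref{LRG}) is never turned into an argument. Finally, for the implication \rreg{}$\Rightarrow w^2\in$\eqref{Muck} your reproducing-kernel test is the right spirit, but the paper does not reprove this: it invokes Gubreev's Proposition 4.2 and supplies the missing hypothesis, namely the integrability $\int_\Rset w^{\pm2}(x)(1+x^2)^{-1}\,\mathrm{d}x<\infty$ of Lemma \ref{lem:w-int}, whose $w^{-2}$ half again requires \eqref{eq-est-main2} tested on $h=f_*(\mu_*)$; this step is absent from your sketch.
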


Recall that we don't restrict zeros of $\varphi(z)$ besides that they are simple.
However,
without loss of generality we assume that there are no
real ones.

Throughout the paper we will use the following notations.
\begin{itemize}
\item $c,C,\varepsilon$  stand
   for different positive constants which may vary even during a single computation;
\item $x \lesssim y$ means that $x \le C y$,
       constant $C$ doesn't depend on indeterminates $x,y$
       and $x \asymp y$ is equivalent to
       $x \lesssim y$ and $y \lesssim x$.
\end{itemize}
Let us describe an outline of the paper.
In section \ref{sec:res}, lemma \ref{cor-M-est},
we give a double-sided estimate for $\calL(z)$.
Emphasize that its predecessor lemma \ref{lem:est-M-below}
is the key element in the whole proof. It provides a lower bound for $\calL(z)$
which is unattainable by methods from \cite{gub00},\cite{gub03} if spectrum $\Lambda$ is arbitrary.
Then in section \ref{sec:wght} we establish weighted estimates of the integral
over $\Rset$ of $R_\lambda(A),\ \calL(\lambda)h$ and $\calL^*(\lambda)h$.
These estimates are obtained by refining G.M.Gubreev's reasoning, namely we split $\Rset$ into several subsets and argue differently for each of them.
In the last section \ref{sec:proof} our main result -- theorem \ref{thm:neces} is proved.

\section{ESTIMATE OF FINITE-MEROMORPHIC RESOLVENT'S PART $\calL(z)$\label{sec:res}}
A more precise formula for $\varphi(z)$ reads as follows
\be
   \lb{eq-phi-detail}
   \varphi(z) = 1 - z(g,f) +\mathrm{i}z^2\int_0^a \mathrm{e}^{\mathrm{i}zt}(V(t)g,f)\mathrm{d}t,
\ee
Thus $\varphi(z)$ belongs to the class Cartwright,
assumes multiplicative representation
\be
  \lb{eq-phi-mult}
   \varphi(z) =
   \mathrm{e}^{\mathrm{i}dz} \prod\nolimits^\prime_{\lambda_k \in \Lambda} (1-z/\lambda_k)
\ee
and
\ben
   h_\varphi(+\frac{\pi}{2}) = -{d} +\frac{l}{2} \le 0,\quad
   h_\varphi(-\frac{\pi}{2}) =  {d} +\frac{l}{2} \le {a},
\een
whence ${d} \ge l/2$.
Here $l$ is the \emph{width} of the indicator diagram
of $\varphi(z)$,
i.e length $\abs{I_\varphi}$ of the interval
$I_\varphi\subset\mathrm{i}\Rset$
\cite[\S17.2, p.127]{Le}.
\begin{lemma}
  \lb{lem:b-posit}
  If $A\in\Ksigma$ and has infinite discrete spectrum then $l>0$
  and therefore ${d}>0$ too.
\end{lemma}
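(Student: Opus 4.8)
The plan is to reduce everything to the single inequality $l>0$, since the relation $d\ge l/2$ recorded just after \eqref{eq-phi-mult} then yields $d>0$ for free. Recall from the formulas displayed before the lemma that the width of the indicator diagram is $l=h_\varphi(\pi/2)+h_\varphi(-\pi/2)$. To compute these two quantities I would start from the explicit representation \eqref{eq-phi-detail} and put $\psi(t):=(V(t)g,f)$; since $V(\cdot)$ is a $C_0$-semigroup the map $t\mapsto V(t)g$ is continuous, so $\psi\in C[0,a]$, and \eqref{eq-phi-detail} exhibits $\varphi$ as a polynomial of degree at most one plus $\mathrm{i}z^2$ times the Fourier--Laplace transform of $\psi$ over $[0,a]$.

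First I would dispose of the degenerate case. If $\psi\equiv0$ then the transform term vanishes identically and $\varphi(z)=1-z(g,f)$ is a polynomial of degree at most one, so $\Lambda=\varphi^{-1}(0)$ contains at most one point; this contradicts the hypothesis that $A$ has infinite discrete spectrum. Hence $\psi\not\equiv0$, and being continuous and not identically zero it is nonzero on a nonempty open subset of $[0,a]$, whence $\beta:=\sup\operatorname{supp}\psi>0$.

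It remains to convert $\beta>0$ into $l>0$. The polynomial summand has exponential type $0$, so it cannot lower the indicator in a direction where the transform term has strictly larger type. In the direction $+\pi/2$ a direct inspection of $\varphi(\mathrm{i}r)$ as $r\to+\infty$ (where $\int_0^a\mathrm{e}^{-rt}\psi(t)\,\mathrm{d}t\to0$, so only the polynomial part survives and grows at most polynomially) gives $h_\varphi(\pi/2)=0$. In the direction $-\pi/2$ I would appeal to P\'olya's theorem on the conjugate (indicator) diagram: the conjugate diagram of $\int_0^a\mathrm{e}^{\mathrm{i}zt}\psi(t)\,\mathrm{d}t$ equals $\mathrm{i}\cdot\ol{\mathrm{conv}}\,(\operatorname{supp}\psi)$, so its type in the direction $-\pi/2$ is exactly $\beta$; multiplying by $z^2$ and adding the type-$0$ polynomial leaves this value unchanged because $\beta\ne0$, giving $h_\varphi(-\pi/2)=\beta$. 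Therefore $l=h_\varphi(\pi/2)+h_\varphi(-\pi/2)=\beta>0$, and consequently $d\ge l/2>0$.

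The main obstacle is this last step: passing rigorously from ``$\psi\not\equiv0$'' to a strictly positive type $h_\varphi(-\pi/2)$, that is, ruling out that cancellation in the Laplace-type integral $\int_0^a\mathrm{e}^{rt}\psi(t)\,\mathrm{d}t$ secretly kills its exponential growth. The support-versus-type identity (P\'olya's conjugate diagram theorem) is precisely what guarantees this; alternatively one argues directly that for $\psi\in L^1(0,a)$ one has $\limsup_{r\to\infty}r^{-1}\log\abs{\int_0^a\mathrm{e}^{rt}\psi(t)\,\mathrm{d}t}=\beta$. Everything else in the argument---the reduction to $l>0$ via $d\ge l/2$, the degenerate-case dichotomy, and the evaluation $h_\varphi(\pi/2)=0$---is routine.
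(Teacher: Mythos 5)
Your reduction to $l>0$, the degenerate case $\psi\equiv0$, and the evaluation $h_\varphi(-\pi/2)=\beta$ are all fine (the last one because the summand $\mathrm{i}z^2\int_0^a\mathrm{e}^{\mathrm{i}zt}\psi(t)\,\mathrm{d}t$ has strictly larger indicator at $-\pi/2$ than the polynomial part, so no cancellation can occur there). The genuine gap is the step you label routine: $h_\varphi(\pi/2)=0$. Your justification only gives the upper bound $h_\varphi(\pi/2)\le 0$ (polynomial growth of $\varphi(\mathrm{i}r)$); what the lemma actually needs is the \emph{lower} bound, i.e.\ that $\varphi(\mathrm{i}r)$ does not decay exponentially, and since $l=h_\varphi(\pi/2)+h_\varphi(-\pi/2)$ this is precisely the content of the lemma, not a side computation. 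The ``secret cancellation'' you worry about on the ray $-\pi/2$ is harmless there but is a real threat on the ray $+\pi/2$: generically $\inf\operatorname{supp}\psi=0$ (note $\psi(0)=(g,f)$), so both summands $1-z(g,f)$ and $\mathrm{i}z^2\int_0^a\mathrm{e}^{\mathrm{i}zt}\psi(t)\,\mathrm{d}t$ have indicator $0$ at $\pi/2$ and may cancel to exponential accuracy. Indeed $\int_0^a\mathrm{e}^{-rt}\psi(t)\,\mathrm{d}t\sim\psi(0)/r$, so $r^2$ times it is comparable to the linear polynomial part; the parenthetical ``only the polynomial part survives'' is simply false. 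One can check that $h_\varphi(\pi/2)=-\gamma<0$ forces $(V(t)g,f)=-\mathrm{i}t-(g,f)$ on $[0,\gamma)$, and only then does pushing $\gamma$ up to $\beta$ (using continuity of $\psi$ and, in the endpoint case, the explicit form $\varphi(z)=\mathrm{e}^{\mathrm{i}az}(1-\mathrm{i}az)$ with its single zero) collide with the infinite-spectrum hypothesis; none of this is in your write-up, so the hard part has been asserted rather than proved.

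For comparison, the paper avoids directional indicator computations altogether: it assumes $l=0$, forms $\varphi(z)/\bigl((z-\lambda_1)(z-\lambda_2)\bigr)$ for two eigenvalues, observes from \eqref{eq-phi-detail} that this function lies in $L^2(\Rset)$ while its indicator diagram has width zero, and concludes from the Paley--Wiener theorem that it (hence $\varphi$) vanishes identically, which is absurd since $\varphi(0)=1$. That argument handles the potential cancellation at $+\pi/2$ automatically, because it works with the width $l$ directly rather than with the two indicators separately. If you want to keep your direct route, replace the claim $h_\varphi(\pi/2)=0$ by an argument showing that $l=0$ would force $\varphi(z)=\mathrm{e}^{\mathrm{i}cz}p(z)$ with $p$ a polynomial, contradicting the infinitude of $\Lambda$.
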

\begin{proof}
Assume on the contrary that $l=0$.
Take two eigenvalues $\lambda_{1,2}$ and define function
\ben
    \psi(z) := \frac{\varphi(z)}{(z-\lambda_1)(z-\lambda_2)}.
\een
Obviously $\abs{I_\psi}=0$. From \eqref{eq-phi-detail} it follows that
$\psi(x) \in L^2(\Rset)$. Then by
Paley-Wiener theorem $\psi(z)\equiv 0$ and so is $\varphi(z)$.
\end{proof}

Following \cite[p.908]{gub00} observe that
\begin{equation}
  \label{eq-biort-0}
  (g(\lambda),f_*(\mu_*)) = (\varphi(\lambda)-\varphi(\mu))/(\mu-\lambda)
\end{equation}
whence
\begin{equation}
  \label{eq-biort-1}
  (g(\lambda_k),f_*(\mu_*)) = -\varphi(\mu)/(\mu-\lambda_k),\;
  \lambda_k \in \Lambda
\end{equation}
and
\begin{equation}
 \label{eq-biort-2}
 (g(\lambda_k),f_*(\lambda_{j*})) = -\delta_{jk}\cdot \varphi^\prime{}(\lambda_k).
\end{equation}

Introduce functions
\ben
    \varphi_k(\lambda) :=
    \frac{\varphi(\lambda)}{(\lambda-\lambda_k)\varphi^\prime(\lambda_k)}.
\een
In the formulae \eqref{eq-f-star-expand}-\eqref{eq-g-norm} below we assume that $A\in\UB$.
Expand $f_*(\mu_*)$ over \efs \eqref{eq-conj-eigen} of $A^*$:
\begin{equation}
  \label{eq-f-star-expand}
  f_*(\mu_*) = \sum_j \overline{\varphi_j(\mu)}
  \cdot
  f_*(\lambda_{j*}).
\end{equation}
Then
\begin{equation}
  \label{eq-f-star-norm}
  \norm{f_*(\mu_*)}^2 \asymp \sum_j
   \abs{\varphi_j(\mu)}^2 \norm{f_*(\lambda_{j*})}^2.
\end{equation}
Quite analogously, expanding $g(\lambda)$ over \eqref{eq-eigen},
\begin{equation}
  \label{eq-g-expand}
  g(\lambda) = \sum_k \varphi_k(\lambda)g(\lambda_k)
\end{equation}
we get
\begin{equation}
  \lb{eq-g-norm}
  \norm{g(\lambda)}^2 \asymp
     \sum_k \abs{\varphi_k(\lambda)}^2
   \norm{g(\lambda_k)}^2.
\end{equation}

From $A\in\UB$ follows uniform minimality of \efs, i.e.
\begin{equation}
 \label{eq-UM}
\norm{f_*(\lambda_{k*})}\norm{g(\lambda_k)} \tag{UM}
\asymp
\left| \varphi^\prime{}(\lambda_k) \right|.
\end{equation}

\begin{lemma}
  \label{lem:est-M-below}
  If $A\in\UB$, then
\(
   \norm{\calL(z)} \ge C > 0, \quad z\in\Cset.
\)
\end{lemma}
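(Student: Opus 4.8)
The plan is to use that $\calL(z)$ is a rank-one operator. From \eqref{eq-res-A} we have $\calL(z)h=\varphi^{-1}(z)(h,f_*(z_*))g(z)$, so its norm is given explicitly by
\[
\norm{\calL(z)}=\frac{\norm{g(z)}\,\norm{f_*(z_*)}}{\abs{\varphi(z)}},\qquad z\in\Cset,
\]
and the assertion is equivalent to the lower bound $\norm{g(z)}\,\norm{f_*(z_*)}\ge C\,\abs{\varphi(z)}$. First I would record the identity obtained from \eqref{eq-biort-0} by letting $\mu\to\lambda$, namely $(g(z),f_*(z_*))=-\varphi'(z)$. Cauchy--Schwarz then gives the clean estimate $\norm{\calL(z)}\ge\abs{\varphi'(z)}/\abs{\varphi(z)}$, which by \eqref{eq-UM} is sharp, up to constants, at the points of $\Lambda$. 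This bound alone cannot suffice, since $\varphi'$ vanishes at the critical points of $\varphi$, so a complementary estimate is needed.

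The heart of the matter, and the step valid for \emph{arbitrary} $\Lambda$, is a resolvent estimate near the spectrum that uses no local analysis of $\varphi$ and is therefore insensitive to clustering of the $\lambda_k$. Writing $\calL(z)=R_z(A)-B(I-zB)^{-1}$ and exploiting the exact eigenrelation $R_z(A)g(\lambda_m)=(\lambda_m-z)^{-1}g(\lambda_m)$ for the nearest eigenvalue $\lambda_m$, so that $\abs{\lambda_m-z}=d(z)$, I would test $\calL(z)$ on $g(\lambda_m)$ to get
\[
\norm{\calL(z)}\ge\frac{1}{d(z)}-\norm{B(I-zB)^{-1}}.
\]
By \eqref{eq-resB} one has $\norm{B(I-zB)^{-1}}\le\int_0^a e^{-t\im z}\norm{V(t)}\,\mathrm{d}t\le M$ on every half-plane $\im z\ge-c$, so in this region, once $d(z)\le 1/(2M)$, the first term dominates and $\norm{\calL(z)}\ge\tfrac12 d(z)^{-1}\ge M$. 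The decisive point is that this is uniform in $m$: no information on how close neighbouring eigenvalues are is used, which is exactly what the methods of \cite{gub00,gub03} cannot deliver and what forces their a priori spectral hypotheses.

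It then remains to bound $\norm{\calL(z)}$ from below off a fixed neighbourhood of $\Lambda$, and in particular as $z\to\infty$. On any compact set disjoint from $\Lambda$ the function $\norm{\calL(z)}$ is continuous and strictly positive, since $g(z),f_*(z_*)\neq0$ and $\varphi(z)\neq0$ there, so it is bounded below there; the entire difficulty is concentrated at infinity. I would treat $z\to\infty$ by comparing growth rates, combining $\norm{g(z)}\,\norm{f_*(z_*)}\ge\abs{\varphi'(z)}$ with a lower bound for $\abs{\varphi'(z)/\varphi(z)}$ on rays avoiding the zeros. The latter follows from the Cartwright/exponential-type structure \eqref{eq-phi-mult} together with the indicator identities and the inequality $d\ge l/2$ from Lemma~\ref{lem:b-posit}: in the lower half-plane $\abs{\varphi'/\varphi}$ tends to the nonzero constant $d+l/2\ge l>0$, which disposes of that region.

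The genuinely delicate case is the upper half-plane, where $h_\varphi(\pi/2)=-d+l/2$ may vanish and $\abs{\varphi'/\varphi}$ can degenerate; this borderline $d=l/2$ is the main obstacle. Here the clean bound is useless and I would instead return to the explicit ratio $\norm{g(z)}\,\norm{f_*(z_*)}/\abs{\varphi(z)}$ and show, via the refined resolvent estimates \eqref{eq-res-norm}--\eqref{eq-resB}, that the numerator decays no faster than $\abs{\varphi(z)}$ as $\im z\to+\infty$; the finer weighted bounds of the following section are precisely designed to quantify this balance. Thus the proof splits $\Cset$ into the near-spectrum strip $\{\im z\ge-c,\ d(z)\le\varepsilon\}$ handled by the uniform resolvent estimate, a compact remainder handled by continuity, and the far field handled by growth comparison, the last being where essentially all the work lies; the near-spectrum estimate is the new ingredient that renders the whole lower bound attainable with no hypothesis on $\Lambda$.
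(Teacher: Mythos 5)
Your opening reduction is fine: $\calL(z)$ is rank one, $\norm{\calL(z)}=\norm{g(z)}\norm{f_*(z_*)}/\abs{\varphi(z)}$, and Cauchy--Schwarz gives $\norm{\calL(z)}\ge\abs{\varphi'(z)/\varphi(z)}$. But from there the argument does not close. The decisive case --- the far field, and in particular the upper half-plane where $h_\varphi(\pi/2)$ may vanish --- is exactly where you write ``I would \ldots show that the numerator decays no faster than $\abs{\varphi(z)}$'' and defer to estimates that are nowhere carried out; that is the entire content of the lemma, not a finishing touch. Your near-spectrum bound $\norm{\calL(z)}\ge d(z)^{-1}-\norm{B(I-zB)^{-1}}$ is also not uniform over all of $\Cset$: by \eqref{eq-res-norm} the subtracted term is bounded only on half-planes $\im z\ge-c$, and since no restriction is placed on $\Lambda$, eigenvalues may recede into the lower half-plane where $\norm{B(I-zB)^{-1}}$ grows like $e^{a\abs{\im z}}$, so the neighbourhood of $\Lambda$ your estimate covers shrinks exponentially and the leftover region is not absorbed by either of your other two cases. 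Finally, the claim that $\abs{\varphi'/\varphi}$ tends to $d+l/2$ in the lower half-plane is asserted, not proved, and for an arbitrary zero set the partial-fraction sum need not settle down along any particular rays.

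The idea you are missing is the paper's trace identity. Since $\calL(z)$ is rank one, $\abs{\tr\calL(z)}\le\norm{\calL(z)}$, and $-\tr\calL(z)=\varphi'(z)/\varphi(z)=\sum'(z-\lambda_k)^{-1}+\mathrm{i}d$ by \eqref{eq-phi-mult}. The key step is then to bound the partial-fraction sum \emph{itself} by $C\norm{\calL(z)}$: writing $(z-\lambda_k)^{-1}=\varphi_k(z)\varphi'(\lambda_k)/\varphi(z)$, applying Cauchy--Schwarz, and invoking \eqref{eq-UM} together with the unconditional-basis norm equivalences \eqref{eq-f-star-norm} and \eqref{eq-g-norm} gives
\(
\abs{\sum'(z-\lambda_k)^{-1}}\lesssim\norm{g(z)}\norm{f_*(z_*)}/\abs{\varphi(z)}=\norm{\calL(z)}.
\)
Hence the constant $d$ is trapped: $d\le\abs{\tr\calL(z)}+\abs{\sum'(z-\lambda_k)^{-1}}\lesssim\norm{\calL(z)}$ uniformly in $z\in\Cset$, and Lemma~\ref{lem:b-posit} supplies $d>0$. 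This single global inequality replaces your entire case decomposition and is precisely what makes the bound attainable with no hypothesis on $\Lambda$.
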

\begin{proof}
Observe that
\ben
   -\tr{\calL(z)} = \frac{\varphi^\prime(z)}{\varphi(z)}
             = \sum\nolimits^\prime \frac1{z-\lambda_k} + \mathrm{i}{d}.
\]
Applying Cauchy-Schwarz inequality we get
\begin{align}
   &\abs{\sum\nolimits^\prime \frac1{z-\lambda_k}}
   \equiv
   \abs{\sum\nolimits^\prime \frac{\varphi_k(z)}{\varphi(z)} \cdot
   \varphi^\prime(\lambda_k)} \notag\\
   &\lesssim
    \frac1{\abs{\varphi(z)}}
      \left[\sum \abs{\varphi_k(z)}^2 \norm{g(\lambda_k)}^2
      \right]^{1/2}
      \cdot
      \left[
      \sum\abs{\varphi_k(z)}^2\norm{f_*(\lambda_{k*})}^2
      \right]^{1/2}
      \notag\\
   &\asymp
   \frac{\norm{g(z)} \cdot    \norm{f_*({z_*})}}{\abs{\varphi(z)}}
   \asymp \norm{\calL(z)},\notag
   \lb{eq-sum-rat}\\
\end{align}
where we used \eqref{eq-UM} and norm estimates \eqref{eq-f-star-norm} and \eqref{eq-g-norm}.
From \eqref{eq-sum-rat} and lemma \ref{lem:b-posit} stems
\ben
  0 < {d} \le \abs{\tr{\calL(z)}} + C\norm{\calL(z)} \lesssim \norm{\calL(z)},
\]
and the lemma is proved.
\end{proof}

Next let us establish double-sided estimate of $\calL(z)$.
\begin{lemma}
\label{cor-M-est}
If $A\in\UB$, then
\be
  \label{eq-est-M}
  \boxed{
          \norm{\calL(z)} = \frac{\norm{g(z)}\cdot\norm{f_*(z_*)}}{\abs{\varphi(z)}}
          \asymp \frac1{\delta(z)}},
  \quad \abs{\im{z}}  \le c, \; c >0.
\ee
\end{lemma}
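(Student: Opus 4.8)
The first equality is immediate and requires no hypothesis: by \eqref{eq-res-A} the operator $\calL(z)$ has rank one, $\calL(z)h=\varphi^{-1}(z)(h,f_*(z_*))g(z)$, and the norm of the rank-one map $h\mapsto\varphi^{-1}(z)(h,f_*(z_*))g(z)$ equals $\norm{g(z)}\,\norm{f_*(z_*)}/\abs{\varphi(z)}$. So the whole task is to establish $\norm{\calL(z)}\asymp 1/\delta(z)$ on the strip $\abs{\im z}\le c$. The plan is to read the size of $\calL(z)$ off the resolvent decomposition \eqref{eq-res-A}, $R_z(A)=B(I-zB)^{-1}+\calL(z)$, after checking that on this strip the regular summand $B(I-zB)^{-1}$ is only a bounded perturbation.

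First I would bound the regular part. By \eqref{eq-resB}, $-\mathrm{i}B(I-zB)^{-1}h=\int_0^a \mathrm{e}^{\mathrm{i}zt}V(t)h\,\mathrm{d}t$, hence for $\im z\ge -c$ we get $\norm{B(I-zB)^{-1}}\le\int_0^a \mathrm{e}^{-t\im z}\norm{V(t)}\,\mathrm{d}t\le C_1=C_1(c,a)$, since $\norm{V(t)}$ is bounded on $[0,a]$. Consequently, on $\abs{\im z}\le c$ the triangle inequality applied to \eqref{eq-res-A} gives $\bigl|\,\norm{\calL(z)}-\norm{R_z(A)}\,\bigr|\le C_1$. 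The two driving facts are then: since $A\in\UB$, the linear resolvent growth condition (LRG) yields $c_1/d(z)\le\norm{R_z(A)}\le c_2/d(z)$, and Lemma \ref{lem:est-M-below} supplies the uniform lower bound $\norm{\calL(z)}\ge C_0>0$. Using $1/\delta(z)=1+1/d(z)$ from \eqref{eq-dist-fn}, the upper estimate is then immediate and needs no case distinction: $\norm{\calL(z)}\le\norm{R_z(A)}+C_1\le c_2/d(z)+C_1\le\max(c_2,C_1)\,(1+1/d(z))=\max(c_2,C_1)/\delta(z)$.

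It remains to prove the lower estimate, the one place a split is needed. Put $d_0:=c_1/(2C_1)$. For $d(z)\le d_0$ one has $c_1/d(z)-C_1\ge c_1/(2d(z))$, so $\norm{\calL(z)}\ge\norm{R_z(A)}-C_1\ge c_1/(2d(z))$; combining this with $\norm{\calL(z)}\ge C_0$ gives $\norm{\calL(z)}\gtrsim 1+1/d(z)=1/\delta(z)$. For $d(z)>d_0$ the quantity $1/\delta(z)=1+1/d(z)$ is bounded above by $1+1/d_0$, while $\norm{\calL(z)}\ge C_0$, so once more $\norm{\calL(z)}\gtrsim 1/\delta(z)$. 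Assembling the two ranges finishes the proof.

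The heavy lifting has already been done in Lemma \ref{lem:est-M-below}; here the only delicate point is the lower bound \emph{far} from $\Lambda$. There the resolvent growth alone would give merely $\norm{\calL(z)}\gtrsim 1/d(z)\to 0$, which is worthless, and it is precisely the uniform positivity $\norm{\calL(z)}\ge C_0$ that keeps $\calL(z)$ bounded below. This is exactly why the \emph{regularized} distance $\delta$, rather than $d$ itself, is the correct normalization on the right-hand side, and I expect this to be the main conceptual obstacle to formulate cleanly even though the estimates are short.
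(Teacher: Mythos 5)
Your proof is correct and follows essentially the same route as the paper: the upper bound from \eqref{LRG} plus the boundedness of the regular summand $B(I-zB)^{-1}$ on the strip (the paper's \eqref{eq-res-norm}), and the lower bound by combining Lemma \ref{lem:est-M-below} away from $\Lambda$ with $\norm{\calL(z)}\ge\norm{R_z(A)}-C_1\gtrsim 1/d(z)$ near the eigenvalues. Your write-up merely makes the paper's ``small circles around eigenvalues'' split explicit via the threshold $d_0=c_1/(2C_1)$, and adds the (routine) rank-one computation for the first equality.
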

\begin{proof}
The upper estimate stems directly from relation \eqref{LRG} and
estimate \eqref{eq-res-norm} of the Fredholm resolvent of operator $B$.

Let us turn to the estimate from below.
Due to lemma \ref{lem:est-M-below} it is enough to proof
it in small circles of radii $\varepsilon$ around eigenvalues
intersecting with the horizontal strip
$\abs{\im{z}} \le c$.
However in these circles the resolvent $R_z(B)$
is uniformly bounded from above,
see \eqref{eq-res-norm}.
 Together with \eqref{LRG} it yields
lemma's assertion.
\end{proof}
\begin{corollary}
If $A\in\UB$, then
\begin{equation}
  \label{eq-est-ginv}
   \frac1{\norm{g(\lambda)}}
   \asymp
   \frac{\delta(\lambda)\cdot\norm{f_*(\lambda_*)}}{\abs{\varphi(\lambda)}},
   \; \lambda \in \Rset.
\end{equation}
\end{corollary}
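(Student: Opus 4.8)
The plan is to obtain \eqref{eq-est-ginv} as an immediate algebraic consequence of Lemma~\ref{cor-M-est}, restricted to the real axis. First I would note that every real point $\lambda$ satisfies $\abs{\im\lambda}=0\le c$ and hence lies in the horizontal strip on which the boxed estimate \eqref{eq-est-M} holds; specializing that estimate to $z=\lambda$ (so that $z_*=-\bar\lambda=-\lambda=\lambda_*$) yields
\ben
  \frac{\norm{g(\lambda)}\cdot\norm{f_*(\lambda_*)}}{\abs{\varphi(\lambda)}}\asymp\frac1{\delta(\lambda)},\quad\lambda\in\Rset,
\een
with comparison constants independent of $\lambda$, inherited from those of Lemma~\ref{cor-M-est}.

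Second, I would simply solve this relation for $1/\norm{g(\lambda)}$. All the quantities involved are strictly positive on $\Rset$: by the standing convention $A\in\Ksigma$ has no real eigenvalues, so $d(\lambda)>0$ and therefore $\delta(\lambda)>0$, while $\abs{\varphi(\lambda)}>0$ for the same reason; moreover $\norm{g(\lambda)}>0$ since $g(\lambda)=(I-\lambda B)^{-1}g$ with $g\ne0$ and $(I-\lambda B)^{-1}$ boundedly invertible. Consequently I may multiply the two-sided comparison above by the positive factor $\delta(\lambda)/\norm{g(\lambda)}$ on each side without disturbing the constants, which produces
\ben
  \frac{\delta(\lambda)\cdot\norm{f_*(\lambda_*)}}{\abs{\varphi(\lambda)}}\asymp\frac1{\norm{g(\lambda)}},\quad\lambda\in\Rset,
\een
i.e. exactly \eqref{eq-est-ginv}.

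The hard work is already behind us: the corollary carries no genuine obstacle, being a one-line rearrangement of the boxed estimate \eqref{eq-est-M}, whose entire two-sided content was established in Lemma~\ref{cor-M-est}. The only point deserving explicit mention is the positivity of the two factors $\delta(\lambda)$ and $\norm{g(\lambda)}$ by which we divide, which guarantees that the symmetric relation $\asymp$ survives the manipulation; this in turn rests solely on the absence of real zeros of $\varphi$ and on $g\ne0$.
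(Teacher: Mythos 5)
Your proposal is correct and matches the paper's (implicit) argument: the corollary is stated without proof precisely because it is the one-line rearrangement of the boxed estimate \eqref{eq-est-M} of Lemma~\ref{cor-M-est} that you carry out, specialized to the real axis where $\abs{\im\lambda}=0\le c$. Your added remarks on the positivity of $\delta(\lambda)$, $\abs{\varphi(\lambda)}$ and $\norm{g(\lambda)}$ (resting on the paper's standing assumption that $\varphi$ has no real zeros) are exactly the right justification for dividing through.
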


\section{WEIGHTED ESTIMATE OF RESOLVENT'S INTEGRAL}\label{sec:wght}
\begin{lemma}
  \lb{lem:estA}
If $A\in\UB$ then
\begin{equation}
  \label{eq-estA}
   \sup_{\lambda_k\in\Lambda}\int_\Rset   \frac{\delta^2(\lambda)}{\abs{\lambda_k-\lambda}^2} \mathrm{d}\lambda
   < \infty.
\end{equation}
\end{lemma}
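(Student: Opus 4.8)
The plan is to exploit two elementary pointwise bounds on the regularized distance and to split the line of integration according to the distance from $\lambda$ to $\re\lambda_k$. Fix $\lambda_k\in\Lambda$ and write $\lambda_k = a_k + \mathrm{i}b_k$ with $a_k = \re\lambda_k$, $b_k = \im\lambda_k$. Since $\lambda_k\in\Lambda$, for every real $\lambda$ one has $d(\lambda) = \dist(\lambda,\Lambda)\le\abs{\lambda-\lambda_k}$, and hence, by the definition \eqref{eq-dist-fn}, $\delta(\lambda)\le d(\lambda)\le\abs{\lambda-\lambda_k}$; on the other hand $\delta(\lambda)\le 1$ for all $\lambda$ because $\delta = d/(1+d)$. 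These are the only two properties of $\delta$ I would use.

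First I would treat the near region $\abs{\lambda-a_k}\le 1$. There the bound $\delta(\lambda)\le\abs{\lambda-\lambda_k}$ gives the pointwise estimate $\delta^2(\lambda)/\abs{\lambda_k-\lambda}^2\le 1$, so that part of the integral is at most the length $2$ of the interval. For the far region $\abs{\lambda-a_k}>1$ I would instead use $\delta(\lambda)\le 1$ together with $\abs{\lambda_k-\lambda}^2 = (\lambda-a_k)^2 + b_k^2 \ge (\lambda-a_k)^2$, which reduces the integrand to $1/(\lambda-a_k)^2$; integrating over $\abs{\lambda-a_k}>1$ then yields $2$. Adding the two pieces bounds the whole integral by $4$, a constant independent of $\lambda_k$, which is precisely the claimed uniform estimate \eqref{eq-estA}.

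The point worth emphasising is that the regularization built into $\delta$ is exactly what makes both ends work: without the cut-off near $1$ the far region would only be controlled by $d^2(\lambda)/\abs{\lambda-\lambda_k}^2$, which need not decay at infinity, whereas the near region needs the vanishing $\delta(\lambda)\lesssim\abs{\lambda-\lambda_k}$ to kill the singularity at $\lambda = a_k$. I do not expect a genuine obstacle here: the argument is purely geometric and in fact does not invoke $A\in\UB$ beyond guaranteeing that $\Lambda$ is discrete, so that $d(\lambda)>0$ for finite real $\lambda$ and $\delta$ is well defined. The only item to verify carefully is that the bound is genuinely uniform in $\lambda_k$, and it is, since the splitting is centred at $a_k$ and both of the bounds employed are translation-invariant.
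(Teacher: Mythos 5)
Your proof is correct and follows essentially the same route as the paper: split $\Rset$ at distance $1$ from $\re\lambda_k$, use $\delta(\lambda)\le d(\lambda)\le\abs{\lambda-\lambda_k}$ near $\re\lambda_k$ and $\delta(\lambda)\le 1$ far from it. The only difference is that you dispense with the paper's preliminary case distinction $\abs{\im\lambda_k}\ge 1$ versus $\abs{\im\lambda_k}<1$, which your splitting handles uniformly; this is a harmless (indeed slightly cleaner) streamlining.
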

\begin{proof}
Recall that always $\delta(\lambda) <1$ and
consider two cases:
\begin{enumerate}
\item \label{ge1} $\abs{\im\lambda_k} \ge 1$ ;
\item \label{lt1} $\abs{\im\lambda_k} < 1$.
\end{enumerate}
\noindent\emph{Case ({\ref{ge1}}).}
Remove $\delta^2(\lambda)$ from \eqref{eq-estA}
and conclude that the integral $\le \pi$.

\noindent\emph{Case ({\ref{lt1}}).}
We can assume for deficiency that
$0<\im\lambda_k < 1$. Fix $k$ and divide $\Rset$ into two sets:
\ben
   \Xi_0 = \{ \abs{\lambda-\re\lambda_k}\ge1\}, \;
   \Xi_1 = \Rset \setminus \Xi_0.
\]
Taking an integral in \eqref{eq-estA} over $\Xi_0$ and replacing there $\delta(\lambda)$ by $1$ we conclude that it is bounded by an absolute constant.
Further, take the same integral over $\Xi_1$ and use trivial bound
$\delta(\lambda) < d(\lambda)$. Set $x=\re\lambda_k,\; y=\im\lambda_k >0$.
Then we need to estimate an integral
\ben
   \int_{\abs{\lambda-x}\le 1}
   \frac{d(\lambda)^2}{(\lambda-x)^2+y^2}\mathrm{d}\lambda
\]
which is $\le 2$ because
$d(\lambda) \le \dist(\lambda,\lambda_k)
= \sqrt{(\lambda-x)^2+y^2}$.
\end{proof}

\begin{lemma}
If $A\in\UB$, then
\begin{equation}
 \label{eq-res-est}
\int_\Rset \norm{(A-\lambda)^{-1}}^2 \cdot \delta(\lambda)^2\mathrm{d}\lambda
\lesssim\norm{h}^2.
\end{equation}
\end{lemma}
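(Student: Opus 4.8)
The plan is to bypass the decomposition \eqref{eq-res-A} entirely and instead exploit that $A\in\UB$ forces $A$ to be similar to a normal operator, reducing the weighted resolvent integral to the scalar bound already secured in Lemma \ref{lem:estA}. Since the eigenfunctions $\{g(\lambda_k)\}$ form an unconditional basis of $\hilb$, there is a bounded, boundedly invertible $S$ carrying an orthonormal basis $\{e_k\}$ onto the (normalized) eigenfunctions, so that $N:=S^{-1}AS$ is normal and diagonal, $Ne_k=\lambda_k e_k$, $\sigma(N)=\Lambda$, exactly as recalled when motivating \eqref{LRG}. Then $(A-\lambda)^{-1}=S(N-\lambda)^{-1}S^{-1}$. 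Writing $u:=S^{-1}h$ and using $\norm{(A-\lambda)^{-1}h}\le\norm{S}\,\norm{(N-\lambda)^{-1}u}$ with $\norm{u}\le\norm{S^{-1}}\,\norm{h}$, it suffices to prove the analogous estimate for the normal model,
\begin{equation*}
  \int_\Rset \norm{(N-\lambda)^{-1}u}^2\,\delta(\lambda)^2\,\mathrm{d}\lambda \lesssim \norm{u}^2,
\end{equation*}
the factors $\norm{S}^2$ and $\norm{S^{-1}}^2$ being absorbed into the constant in $\lesssim$.

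For the normal model I would use the spectral (diagonal) representation directly. Since we assume $\Lambda\cap\Rset=\varnothing$, the operator $(N-\lambda)^{-1}$ is defined for every $\lambda\in\Rset$, and expanding $u=\sum_k(u,e_k)e_k$ gives, by orthonormality of $\{e_k\}$,
\begin{equation*}
  \norm{(N-\lambda)^{-1}u}^2 = \sum_k \frac{\abs{(u,e_k)}^2}{\abs{\lambda_k-\lambda}^2}.
\end{equation*}
Integrating against $\delta(\lambda)^2\,\mathrm{d}\lambda$ and interchanging summation and integration (all terms are nonnegative, so Tonelli applies) yields
\begin{equation*}
  \int_\Rset \norm{(N-\lambda)^{-1}u}^2\,\delta(\lambda)^2\,\mathrm{d}\lambda = \sum_k \abs{(u,e_k)}^2 \int_\Rset \frac{\delta(\lambda)^2}{\abs{\lambda_k-\lambda}^2}\,\mathrm{d}\lambda.
\end{equation*}
By Lemma \ref{lem:estA} the inner integral is bounded by a constant independent of $k$, so the right-hand side is $\lesssim\sum_k\abs{(u,e_k)}^2=\norm{u}^2$, which completes the reduction and hence the proof.

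The argument has essentially no obstacle beyond Lemma \ref{lem:estA}: once that uniform scalar bound is available, similarity to the diagonal normal model turns the vector estimate into a termwise summation. The point I would emphasize is a negative one, namely which route to \emph{avoid}. One is tempted to split $(A-\lambda)^{-1}h$ by \eqref{eq-res-A}, dispose of $B(I-\lambda B)^{-1}h$ by \eqref{eq-resB-est} (legitimate, since $\delta(\lambda)<1$), and estimate $\calL(\lambda)h$ separately; but by Lemma \ref{cor-M-est} the $\calL$-term reduces precisely to the \rreg{} property of $f_*(\cdot)$, which is one of the conclusions this lemma is meant to help establish, so that route is circular. Channeling the whole resolvent through the normal model keeps the estimate self-contained.
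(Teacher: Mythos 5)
Your proof is correct and is essentially the paper's own argument in different clothing: the paper expands $h$ in the unconditional eigenbasis and uses the norm equivalence $\norm{(A-\lambda)^{-1}h}^2\asymp\sum_k\abs{h_k}^2\abs{\lambda_k-\lambda}^{-2}$, which is exactly what your conjugation to the diagonal normal model $N=S^{-1}AS$ delivers, and both then integrate termwise and invoke Lemma \ref{lem:estA}. Your closing remark about the circularity of routing the estimate through \eqref{eq-res-A} is a sensible observation but not needed for the proof itself.
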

\begin{proof}
Expand $h$ into \efs
\begin{equation}
   \label{eq-expan-g}
   h = \sum_k h_k g(\lambda_k) /\norm{g(\lambda_k)}.
\end{equation}
Then
\ben
   (A-\lambda)^{-1}h = \sum_k h_k \frac{g(\lambda_k)}{\lambda_k - \lambda}
   \cdot \frac{1}{\norm{g(\lambda_k)}}
\]
and
\ben
   \norm{(A-\lambda)^{-1}h}^2
   \asymp
   \sum_k \abs{h_k}^2 \frac1{\abs{\lambda_k-\lambda}^2}.
\]
Therefore we have
\begin{equation}
  \label{eq-res-asymp}
   \int_{\Rset}\norm{(A-\lambda)^{-1}h}^2
   \cdot \delta^2(\lambda)\mathrm{d}\lambda
   \asymp \sum_k \abs{h_k}^2
   \int_\Rset \frac{\delta^2(\lambda)}{\abs{\lambda_k-\lambda}^2}\mathrm{d}\lambda
\end{equation}
and it suffices to apply inequality \eqref{lem:estA}.
\end{proof}

\begin{lemma}
\label{lem-res-est-below}
If $A\in\UB$ then
the following estimates hold true
\begin{equation}
\label{eq-est-main1}
\int_\Rset \delta^2(\lambda) \frac1{\abs{\varphi(\lambda)}^2}
   \norm{g(\lambda)}^2\cdot \abs{(h,f_*(\lambda_*))}^2\mathrm{d}\lambda
   \lesssim \norm{h}^2,
\end{equation}
\begin{equation}
\label{eq-est-main2}
\int_\Rset \delta^2(\lambda) \frac1{\abs{\varphi(\lambda)}^2}
   \norm{f_*(\lambda_*)}^2\cdot \abs{(h,g(\lambda))}^2\mathrm{d}\lambda
   \lesssim \norm{h}^2.
\end{equation}
\end{lemma}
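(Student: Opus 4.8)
The plan is to read off that the integrand in \eqref{eq-est-main1} is precisely $\delta^2(\lambda)\norm{\calL(\lambda)h}^2$. Indeed, by the definition of $\calL$ in \eqref{eq-res-A} we have $\calL(\lambda)h=\varphi^{-1}(\lambda)(h,f_*(\lambda_*))g(\lambda)$, so that $\norm{\calL(\lambda)h}^2=\abs{\varphi(\lambda)}^{-2}\abs{(h,f_*(\lambda_*))}^2\norm{g(\lambda)}^2$, which is exactly the integrand. Thus \eqref{eq-est-main1} is the weighted bound $\int_\Rset\delta^2(\lambda)\norm{\calL(\lambda)h}^2\mathrm{d}\lambda\lesssim\norm{h}^2$. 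The device I would use is to trade $\calL$ for the full resolvent: from \eqref{eq-res-A}, $\calL(\lambda)=(A-\lambda)^{-1}-B(I-\lambda B)^{-1}$, whence
\ben
  \norm{\calL(\lambda)h}^2 \le 2\norm{(A-\lambda)^{-1}h}^2 + 2\norm{B(I-\lambda B)^{-1}h}^2 .
\een

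Next I would multiply by $\delta^2(\lambda)$ and integrate over $\Rset$. The first term on the right is controlled directly by the resolvent estimate \eqref{eq-res-est}, giving $\lesssim\norm{h}^2$. For the second term I would discard the weight using $\delta(\lambda)<1$ and invoke \eqref{eq-resB-est} (equivalently, \eqref{eq-resB} combined with Plancherel's theorem applied to the compactly supported function $t\mapsto V(t)h$), which yields $\int_\Rset\norm{B(I-\lambda B)^{-1}h}^2\mathrm{d}\lambda\lesssim\norm{h}^2$. Summing the two contributions proves \eqref{eq-est-main1}; note that this estimate is essentially a free consequence of the resolvent decomposition together with the two integral bounds already established above.

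For \eqref{eq-est-main2} I would argue by duality rather than repeat the computation. By Remark \ref{rem:adjoint} the operator $A_*=-A^*$ again belongs to $\Ksigma$, and since similarity to a normal operator is inherited by $-A^*$, we have $A_*\in\UB$ as well. The data of $A_*$ are obtained from those of $A$ by interchanging the two \qexp{s}: its $g$-function is (up to sign) $f_*(\cdot)$, its $f_*$-function is $g(\cdot)$, its spectrum is $\Lambda_*=\{-\ol{\lambda_k}\}$, and its Fredholm determinant $\tilde\varphi$ satisfies $\tilde\varphi(z)=\ol{\varphi(-\bar z)}$ (a short computation using $B_*=-B^*$). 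Applying the already proved \eqref{eq-est-main1} to $A_*$ therefore gives
\ben
  \int_\Rset \tilde\delta^2(\lambda)\,\frac{1}{\abs{\tilde\varphi(\lambda)}^2}\,
     \norm{f_*(\lambda)}^2\cdot\abs{(h,g(\lambda_*))}^2\,\mathrm{d}\lambda \lesssim \norm{h}^2 ,
\een
where $\tilde\delta$ is the regularized distance to $\Lambda_*$.

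Finally I would perform the reflection $\lambda\mapsto-\lambda$. On the real axis $\lambda_*=-\lambda$, so this turns $\norm{f_*(\lambda)}$ into $\norm{f_*(\lambda_*)}$ and $(h,g(\lambda_*))$ into $(h,g(\lambda))$, reproducing the integrand of \eqref{eq-est-main2}. It then remains to verify the two reflection identities for the weights on $\Rset$: from $\abs{\lambda-\ol{\lambda_k}}=\abs{\lambda-\lambda_k}$ one gets $\dist(-\lambda,\Lambda_*)=\dist(\lambda,\Lambda)$, hence $\tilde\delta(-\lambda)=\delta(\lambda)$; and from $\tilde\varphi(z)=\ol{\varphi(-\bar z)}$ one gets $\abs{\tilde\varphi(-\lambda)}=\abs{\varphi(\lambda)}$. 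This bookkeeping of the reflected zero set and determinant is the only point requiring care; once it is in place, \eqref{eq-est-main2} follows, completing the proof.
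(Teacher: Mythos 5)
Your proof of \eqref{eq-est-main1} is correct and is exactly the paper's argument: the decomposition $\calL(\lambda)=R_\lambda(A)-B(I-\lambda B)^{-1}$, the triangle inequality, and then \eqref{eq-res-est} for the resolvent term and \eqref{eq-resB-est} (with $\delta<1$) for the Volterra term. For \eqref{eq-est-main2} the paper only says it is proved ``along the same lines'' (i.e.\ the adjoint version of the same decomposition, since the integrand is $\delta^2(\lambda)\norm{\calL(\lambda)^*h}^2$); your explicit passage to $A_*=-A^*\in\Ksigma\cap\UB$ followed by the reflection $\lambda\mapsto-\lambda$ is a correct, fully detailed realization of that step, and your identities $\tilde\varphi(z)=\ol{\varphi(-\bar z)}$, $\abs{\tilde\varphi(-\lambda)}=\abs{\varphi(\lambda)}$ and $\tilde\delta(-\lambda)=\delta(\lambda)$ all check out.
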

\begin{proof}
Let us begin with \eqref{eq-est-main1}. We have
\ben
   \norm{R_\lambda(A)h} \ge
   \norm{\calL(\lambda){h}} - \norm{R_\lambda(B)h}
\]
whence
\begin{align*}
  \norm{\calL(\lambda)}^2
  & \le
  2\left[ \norm{R_\lambda(A)h}^2 + \norm{R_\lambda(B)h}^2\right].
\end{align*}
Thus
\ben
   \int_\Rset \delta^2(\lambda)\cdot\norm{\calL(\lambda){h}}^2
   \mathrm{d}\lambda
   \le
   C\norm{h}^2 + K\norm{h}^2,
\]
where we took into account \eqref{eq-res-est}
and  \eqref{eq-resB-est}.

The second inequality \eqref{eq-est-main2} is proved along the same lines.
\end{proof}

\section{PROOF OF THE THEOREM \ref{thm:neces}.}\lb{sec:proof}
Recall that items \eqref{en-cond1} -- \eqref{en-cond3} below
are tho\-se from this theorem.

\emph{First implication in the statement \eqref{en-cond1}}.
Combining
\eqref{eq-est-ginv} and \eqref{eq-est-main2},
we obtain the desired inequality
\begin{align*}
  \int_\Rset
   \frac{\abs{(g(\lambda),h)}^2}{\norm{g(\lambda)}^2}\mathrm{d}\lambda
  \lesssim
    \int_\Rset
  \delta^2(\lambda) \frac{1}{\abs{\varphi(\lambda)}^2}
   \norm{f_*(\lambda_*)}^2 \cdot
  \abs{(g(\lambda),h)}^2\mathrm{d}\lambda
  \lesssim
    \norm{h}^2.
\end{align*}
Next, we will need a lemma which is a counterpart of Lemma 4.1 in \cite{gub00}.
Its proof is essentially the same as in \cite{gub00}.
For the reader's sake we reproduce it here with necessary changes, putting
the factor $\delta(\lambda)$ in appropriate places.
\begin{lemma}
   \label{lem:w-int}
If $A\in\UB$, then
   the following estimates hold true
\begin{equation}
   \label{eq-w-int}
   \int_\Rset w^{\pm 2}(x)(1+x^2)^{-1}\mathrm{d}x < \infty.
\end{equation}
\end{lemma}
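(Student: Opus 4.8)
The plan is to handle the two weights $w^{+2}=\norm{g(\cdot)}^2$ and $w^{-2}$ by completely different means: the first is essentially the $B$-resolvent estimate \eqref{eq-resB-est}, while the second rests on the lower bound \eqref{eq-est-ginv}, the estimate \eqref{eq-est-main2}, and a splitting of $\Rset$.

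For $w^{+2}$ I would use the algebraic identity $(I-xB)^{-1}=-x^{-1}(B-x^{-1}I)^{-1}=-x^{-1}R_{1/x}(B)$, valid for $x\ne0$, so that $\norm{g(x)}^2=x^{-2}\norm{R_{1/x}(B)g}^2$. Substituting $u=1/x$ (hence $\abs{\mathrm{d}x}=u^{-2}\mathrm{d}u$, $x^{-2}=u^2$, $1+x^2=(u^2+1)/u^2$) gives
\ben
\int_\Rset \frac{w^2(x)}{1+x^2}\,\mathrm{d}x=\int_\Rset \frac{u^2}{u^2+1}\norm{R_u(B)g}^2\,\mathrm{d}u\le \int_\Rset\norm{R_u(B)g}^2\,\mathrm{d}u\le K\norm{g}^2,
\een
the last inequality being exactly \eqref{eq-resB-est}. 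The identical argument applied to $A_*=-A^*$, which by Remark~\ref{rem:adjoint} again belongs to $\Ksigma$ and inherits $A_*\in\UB$ from $A\in\UB$ (its associated weight being $w_*$), yields the companion bound $\int_\Rset w_*^2(x)(1+x^2)^{-1}\,\mathrm{d}x<\infty$; this auxiliary estimate is needed below.

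For $w^{-2}$ I would start from the corollary \eqref{eq-est-ginv}, which on the real axis reads $\norm{g(\lambda)}^{-2}\asymp\delta^2(\lambda)\norm{f_*(\lambda_*)}^2/\abs{\varphi(\lambda)}^2$, so it suffices to bound $\int_\Rset\delta^2(\lambda)\norm{f_*(\lambda_*)}^2\abs{\varphi(\lambda)}^{-2}(1+\lambda^2)^{-1}\mathrm{d}\lambda$. The key input is \eqref{eq-est-main2} specialized to $h=f$: since $\varphi(z)=1-z(g(z),f)$ one has $(f,g(\lambda))=\overline{(1-\varphi(\lambda))/\lambda}$ on $\Rset$, whence
\ben
\int_\Rset \delta^2(\lambda)\frac{\norm{f_*(\lambda_*)}^2}{\abs{\varphi(\lambda)}^2}\cdot\frac{\abs{1-\varphi(\lambda)}^2}{\lambda^2}\,\mathrm{d}\lambda\lesssim\norm{f}^2. \tag{$\star$}
\een
It then remains to pass from $\abs{1-\varphi}^2/\lambda^2$ to $(1+\lambda^2)^{-1}$, and here I would split $\Rset$ into three pieces. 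On $\{\abs{\lambda}\le1\}$ the integrand is bounded, since $\varphi$ has no real zeros (so $\abs{\varphi}$ is bounded below on this compact), $\norm{f_*(\lambda_*)}$ is bounded there, and $\delta\le1$. On $\{\abs{\lambda}>1,\ \abs{1-\varphi(\lambda)}\ge 1/2\}$ one has $\abs{1-\varphi}^2/\lambda^2\gtrsim(1+\lambda^2)^{-1}$, so the contribution is controlled by $(\star)$. On $\{\abs{\lambda}>1,\ \abs{1-\varphi(\lambda)}<1/2\}$ one has $\abs{\varphi(\lambda)}\asymp1$, so the integrand is $\asymp\delta^2(\lambda)\norm{f_*(\lambda_*)}^2(1+\lambda^2)^{-1}\le w_*^2(\lambda)(1+\lambda^2)^{-1}$, integrable by the auxiliary bound of the first paragraph. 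Summing the three pieces gives $\int_\Rset w^{-2}(x)(1+x^2)^{-1}\mathrm{d}x<\infty$.

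The $w^{+2}$ estimate is the routine part — a single change of variables. The crux, and precisely the spot where the absence of any spectral restriction bites, is the $w^{-2}$ estimate on the set where $\varphi$ stays near $1$: there $(\star)$ degenerates and carries no information, forcing one to fall back on the independently obtained integrability of the adjoint weight $w_*^2$. The only delicate bookkeeping is to verify that the three-fold splitting genuinely exhausts $\Rset$ and that $\abs{\varphi}\asymp1$ holds on the last piece.
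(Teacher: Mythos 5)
Your argument for the $w^{+2}$ estimate contains a genuine gap. You identify $R_u(B)$ with the spectral resolvent $(B-uI)^{-1}$ and then invoke \eqref{eq-resB-est} to claim $\int_\Rset\norm{(B-uI)^{-1}g}^2\mathrm{d}u\le K\norm{g}^2$. But the operator controlled by \eqref{eq-resB}--\eqref{eq-resB-est} is the Fredholm resolvent $R_\lambda(B)=B(I-\lambda B)^{-1}$ (only for this object is \eqref{eq-resB-est} a Plancherel identity, and only this reading is consistent with the identity $g(x)-g(0)=xB(I-xB)^{-1}g$ underlying \eqref{eq-iden1}). For the spectral resolvent your claimed bound is false: $0\in\sigma(B)$ and generically $g\notin B\hilb$, so the integral diverges at $u=0$; e.g.\ for $B=J_a$ and $g\equiv1$ one computes $\norm{(J_a-uI)^{-1}g}^2=a/u^2$. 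Note that your change of variables is an exact identity, $\tfrac{u^2}{u^2+1}\norm{(B-uI)^{-1}g}^2=w^2(1/u)/(1+u^2)$, so up to that point you have merely rewritten the integral you want to bound; discarding the factor $u^2/(u^2+1)$ throws away precisely what keeps it finite near $u=0$. The repair is the paper's one-line argument: write $g(x)=g+xB(I-xB)^{-1}g$, so that $\int_\Rset x^{-2}\norm{g(x)-g}^2\mathrm{d}x\le K\norm{g}^2$ by \eqref{eq-resB-est}, which together with boundedness of $g(\cdot)$ on $[-1,1]$ gives $\int_\Rset w^2(x)(1+x^2)^{-1}\mathrm{d}x<\infty$; the same applied to $B_*$ and $f$ yields the auxiliary bound $\int_\Rset w_*^2(x)(1+x^2)^{-1}\mathrm{d}x<\infty$ that your last case requires. (This auxiliary bound, incidentally, needs no unconditional basisness at all.)

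Your treatment of $w^{-2}$ is, once the auxiliary bound is repaired as above, correct and genuinely different from the paper's. The paper also starts from \eqref{eq-est-ginv} and \eqref{eq-est-main2}, but tests the latter on $h=f_*(\mu_{1*})$ for a non-eigenvalue $\mu_1$ and on $h=f_*(\mu_{2*})$ for an eigenvalue $\mu_2$, then combines the two resulting finite integrals via \eqref{eq-biort-0}--\eqref{eq-biort-1} to remove the factor $\abs{\varphi(\lambda)-\varphi(\mu_1)}^2$. You instead test on $h=f$, which produces the factor $\abs{1-\varphi(\lambda)}^2/\lambda^2$, and dispose of the set where $\varphi$ is close to $1$ by falling back on the integrability of $w_*^2(x)(1+x^2)^{-1}$. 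Your three-fold splitting does exhaust $\Rset$ and each case is handled correctly ($\abs{\varphi}\ge1/2$ on the last piece is immediate). The trade-off is that your route makes the $w^{-2}$ bound depend on the adjoint weight's $w^{+2}$-type estimate, whereas the paper's two-test-vector trick is self-contained within \eqref{eq-est-main2}; both are legitimate.
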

\begin{proof}
Convergence of the first integral stems from the identity
\cite[p.908]{gub00}
\be \lb{eq-iden1}
\int_\Rset x^{-2}\norm{g(x)-g(0)}^2\mathrm{d}x =
\int_\Rset \norm{R_x(B)g}^2\mathrm{d}x \le K\norm{g}^2.
\ee
Estimate the second integral.
From \eqref{eq-est-ginv} stems that
\be
  \label{eq-est-winv}
   \int_\Rset w^{-2}(x)(1+x^2)^{-1}\mathrm{d}x
   \lesssim
   \int_\Rset \delta^2(x)\abs{\varphi(x)}^{-2}
   \norm{f_*(x_*)}^2 (1+x^2)^{-1}\mathrm{d}x.
\ee
Following \cite[p.909-910 ]{gub00},
we put in \eqref{eq-est-main2}
$h = f_*(\mu_*)$ for
$\mu = \mu_1 \notin \sigma(A)$
and
$\mu = \mu_2 \in \sigma(A)$.
From \eqref{eq-biort-0} and \eqref{eq-biort-1} it follows that
\begin{align*}
&\int_\Rset \delta^2(\lambda)
  \abs{\varphi(\lambda)}^{-2}
   \norm{f_*(\lambda_*)}^2 \cdot
   \abs{\varphi(\lambda)-\varphi(\mu_1)}^2
   \abs{\lambda-\mu_1}^{-2}
   \mathrm{d}\lambda
   < \infty, \\
& \int_\Rset \delta^2(\lambda)
   \norm{f_*(\lambda_*)}^2 \cdot
   \abs{\lambda-\mu_2}^{-2}
   \mathrm{d}\lambda
   < \infty,
\end{align*}
whence
\ben
\int_\Rset \delta^2(\lambda)
  \abs{\varphi(\lambda)}^{-2}\cdot
   \norm{f_*(\lambda_*)}^2
   \abs{\lambda-\mu_1}^{-2}
   \mathrm{d}\lambda
   < \infty.
\]
Plugging this estimate into \eqref{eq-est-winv} yields the desired result.
\end{proof}

\emph{Second implication in the statement \eqref{en-cond1}}.
Recall that its counterpart is Proposition 4.2 in \cite{gub00}
whose
demonstration is based on the following assumptions:
\renewcommand{\labelenumi}{(\alph{enumi})}
\renewcommand{\theenumi}{\alph{enumi}}
\begin{enumerate}
	\item $B\in\xiexp$;        \lb{en-a}
	\item $g(\cdot)$ is \rreg; \lb{en-b}
	\item integrals from Lemma 4.1 in \cite{gub00} converge; \lb{en-c}
	\item validity of estimate (4.19) from Lemma 4.3 in \cite{gub00}. \lb{en-d}
\end{enumerate}
However \eqref{en-c} is established in our lemma \ref{lem:w-int}.
Next, \eqref{en-d} was proved in \cite{gub00} with reference to Lemma 4.2
in \cite{gub00} for $\eta=0$.
But the latter is exactly \emph{right-regularity} of $g(\cdot)$.
Thus we completed the proof of the second implication in \eqref{en-cond1}
as well as of the whole statement
\eqref{en-cond1}.

\emph{Statement \eqref{en-cond2}.}
It is enough to pass from $A\in\Ksigma$ to $A_*\in\Ksigma$ and apply
already established
statement \eqref{en-cond1}.

\emph{Statement \eqref{en-cond3}.}
It stems immediately from the relation
\be
 \lb{eq-main}
 \boxed{
        W^2(\lambda) \asymp w_*^{2}(\lambda), \ \lambda\in\Rset
    }
\ee
which is valid due to \eqref{eq-est-M}.

\thanks{I am indebted to my wife, Irina, for all her support,
and to D.Ya\-ku\-bo\-vich for help with literature.}

\end{document}